\documentclass[12]{article}

\RequirePackage[OT1]{fontenc}
\RequirePackage{amsthm,amsmath,amssymb,amsfonts,graphicx,setspace}
\RequirePackage[numbers]{natbib}
\RequirePackage[colorlinks,citecolor=blue,urlcolor=blue]{hyperref}

\numberwithin{equation}{section}
\theoremstyle{plain}
\newtheorem{theorem}{Theorem}
\newtheorem{corollary}{Corollary}
\newtheorem{lemma}{Lemma}

\doublespacing
\setlength{\topmargin}{.75in}
\setlength{\headheight}{0in}
\setlength{\headsep}{0in}
\setlength{\textheight}{7in}
\setlength{\textwidth}{6in}
\setlength{\oddsidemargin}{.25in}
\setlength{\evensidemargin}{.25in}
\setlength{\parindent}{0.25in}
\setlength{\parskip}{0in}

\def\P{\mathbb P}

\def\E{\mathbb E}

\newcommand{\bX}{\mbox{$\bf X$}}
\newcommand{\blds}[1]{\mbox{\scriptsize \bf #1}}

\newcommand{\tbX}{\mbox{$T({\bf X})$}}

\newcommand{\bY}{\mbox{$\bf Y$}}

\newcommand{\bx}{\mbox{$\bf x$}}
\newcommand{\br}{\mbox{$\bf r$}}
\newcommand{\bs}{\mbox{$\bf s$}}
\newcommand{\bk}{\mbox{$\bf k$}}

\def\real{\mathbb R}
\newcommand{\deq}{\stackrel{\scriptscriptstyle\triangle}{=}}
\begin{document}

\title{Consistent Testing for Recurrent Genomic Aberrations}

\author{Vonn Walter \\
	Lineberger Comprehensive Cancer Center, \\
	University of North Carolina at Chapel Hill, Chapel Hill, NC 27579 \\
	\and
	Fred A. Wright \\
	Departments of Statistics and Biological Sciences \\
          and the Bioinformatics Research Center, \\
	North Carolina State University, Raleigh, NC 27695
	\and
	Andrew B. Nobel \\
	Department of Statistics and Operations Research, \\
	Department of Biostatistics, and Lineberger Comprehensive Cancer Center \\
	University of North Carolina at Chapel Hill, Chapel Hill, NC 27579
	}
\maketitle

\newpage
\begin{abstract}
Genomic aberrations, such as somatic copy number alterations, are frequently observed in tumor tissue.   Recurrent aberrations, occurring in the
same region across multiple subjects, are of interest because they may highlight genes associated with tumor development or progression.
A number of tools have been proposed to assess the statistical significance of recurrent DNA copy number aberrations, but their statistical properties
have not been carefully studied.  Cyclic shift testing, a permutation procedure using independent random shifts of genomic marker observations on the genome,
has been proposed to identify recurrent aberrations, and is potentially useful for a wider variety of purposes, including identifying regions with methylation
aberrations or overrepresented in disease association studies. For data following a countable-state Markov model, we prove the asymptotic validity of cyclic shift $p$-values under a fixed sample size regime as the number of observed markers
tends to infinity.  We illustrate cyclic shift testing for a variety of data types, producing biologically relevant findings for three publicly available datasets.
\end{abstract}

\section{Introduction}

Many genomic datasets consist of measurements from multiple samples at a common set of 
genetic markers, with no ``phenotype" representing clinical state or experimental condition of the sample.
Datasets of this type include genome-wide measurements of DNA copy number 
or DNA methylation, for which the main goal is to identify aberrant regions on the genome that tend to have extreme measurements
in comparison to other regions.  Testing for aberrations  requires some thought about appropriate test statistics, and 
constructing a null distribution that appropriately reflects serial correlation structures inherent to genomic data.
A meta-analysis across several genome-wide association studies might also be viewed in this framework, in the sense that the testing
for association within each study produces a vector of $p$-values that might be viewed as a vector of ``observations."

The problem of interest to us is the identification of aberrant markers, where multiple samples 
exhibit a coordinated (unidirectional), departure from the expected state.  Aberrant markers are of
particular interest in cancer studies, where tumor suppressors or oncogenes exhibit DNA copy variation or modified methylation levels.  
Similarly, it may be possible to identify pleiotropic single nucleotide polymorphisms (SNPs) in disease association by identifying 
genetic markers that repeatedly give rise to small $p$-values in multiple association studies.

In this paper we provide a rigorous asymptotic analysis of a permutation based testing procedure 
for identifying aberrant markers in genomic data sets.  The procedure, called DiNAMIC, was introduced in 
Walter et al.\ (2011), \nocite{walter2011} and is described in detail below.  In contrast to other procedures
which permute all observations, DiNAMIC is based on cyclic shifting of samples.  Cyclic shifting eliminates
concurrent findings across samples, but retains the 
adjacency of observations in a sample (with the exception of the first and last entries), thereby largely preserving 
the correlation structure among markers.  Our principal result is that, for a broad family of null
data distributions, the sampling distribution of the DiNAMIC
procedure is close to the true conditional distribution of the data restricted to its cyclic shifts.  As a corollary,
we find that the cyclic shift testing provides asymptotically correct Type I error rates.

The outline of the paper is as follows.  The next section is devoted to a description of the cyclic shift procedure, 
a discussion of the underlying testing framework within which our analysis is carried out, and a statement of our 
principal result.  In Section \ref{sec3} we apply cyclic shift testing to DNA copy number analysis, 
DNA methylation analysis, and meta-analysis of GWAS data, and show that the results are  
consistent with the existing biological literature.  Because of its broad applicability and 
solid statistical foundation, we believe that cyclic shift testing is a valuable tool for the identification
of aberrant markers in many large scale genomic studies.

\section{Asymptotic Consistency of Cyclic Shift Permutation}

\subsection{Data Matrix}

We consider a data set derived from $n$ subjects at $m$ common genomic locations or markers.  The data is arranged in an 
$n \times m$ matrix $\bX$ with values in a set $\cal{A} \subseteq \real$.  Depending on the application, $\cal{A}$ may be
finite or infinite.
The entry $x_{ij}$ of $\bX$ contains data from subject $i$ at marker $j$.  Thus the $i$th row $\bX_{i \cdot}$ 
of $\bX$ contains the data from subject $i$ at all markers, and the $j$th column 
$\bX_{\cdot j}$ of $\bX$ contains the data at marker $j$ across subjects.  
For $1 \leq j \leq m$ let $s_j = s_j(\bX_{\cdot j})$ be a local summary statistic for 
the $j$th marker.  
In most applications the simple sum statistic $s_j = \sum_{i = 1}^n x_{ij}$ is employed.  
In order to identify locations with coordinated departures from baseline behavior, we apply a global
summary statistic to the local statistics $s_1,\ldots, s_m$.  When looking for extreme, positive departures from
baseline it is natural to employ the global statistic
\begin{equation}
\label{globalstat}
\tbX \ = \ {\mathrm{max}} (s_1, \dots, s_m) .
\end{equation}
To detect negative departures from baseline, the maximum may be replaced by a minimum.    
The cyclic shift procedure and the supporting theory in Theorem \ref{thm1} apply to arbitrary local 
statistics, as well as a range of global statistics. 

\subsection{Cyclic Shift Testing}

Given a data matrix $\bX$, we are interested in assessing the significance of the observed value $t_0 = T(\bX)$ 
of the global statistic.  When $t_0$ is found to be significant, the identity and location of the marker $j$ having 
the maximum (or minimum) local statistic is of primary biological importance.  While in special cases 
it is possible to compute $p$-values for $t_0$ under parametric assumptions, permutation
based approaches are often an attractive and more flexible alternative.  A permutation based $p$-value can be
obtained by applying permutations $\pi$ to the entries of $\bX$, producing the matrices $\pi(\bX)$, and then comparing $t_0$ to the resulting values 
$T(\pi(\bX))$ of the global statistic.  
The maximum global statistic accounts for multiple comparisons across markers, so it
is not necessary to apply further multiplicity correction to the permuted values $T(\pi(\bX))$.

The performance and suitability of permutation based $p$-values in the marker identification problem
depends critically on the family of allowable permutations $\pi$.  If $\pi$ permutes the entries of $\bX$
without preserving row or column membership, then the induced null distribution is equivalent to sampling
the entries of $\bX$ at random without replacement.  In this case the induced null distribution does not capture 
the correlation of measurements within a sample, or systematic differences (e.g.\ in scale, 
location, correlation) between samples.  In real data, correlations within and systematic differences 
between samples can be present even in the absence of aberrant markers.
As such, $p$-values obtained under full permutation of $\bX$ will be sensitive to secondary features of the data
and may yields significant $p$-values even when no aberrant markers are present.  
An obvious improvement of full permutation is to separately permute
the values in each row (sample) of the data matrix.  This approach is used in the GISTIC procedure of Beroukhim et al.\ (2007).  While row-by-row permutation preserves some differences 
between rows, it eliminates correlations within rows (and correlation differences between rows), so that the
induced null distribution is again sensitive to secondary, correlation based features of the data that are not
related to the presence of aberrant markers.

The DiNAMIC cyclic shift testing procedure of Walter et al.\ (2011) \nocite{walter2011}  
addresses the shortcomings of full and row-by-row permutation by further restricting the 
set of allowable permutations.  In the procedure, each row of the data matrix 
is shifted to the left in a cyclic fashion, as detailed below, so that the first $k$ entries of the vector are placed 
after the last element; the values of the offsets $k$ are chosen independently from row
to row.  Cyclic shifting preserves the serial correlation structure with each sample, except at
the single break point where the last and first elements of the unshifted sample are placed next
to one another.  At the same time, the use of different offsets breaks 
concurrency among the samples, so that the resulting cyclic null distribution is appropriate for
testing the significance of $t_0 = T(\bX)$.

\subsection{Cyclic Shift Testing}

Formally, a {\textit{cyclic shift}} of index $k \in \{0,\ldots,m-1\}$ is a map $\sigma_k:  {\cal{A}}^m \rightarrow {\cal{A}}^m$ 
whose action is defined as follows:
\[
\sigma_k(x_{1}, x_{2}, \dots, x_{m}) = (x_{k+1}, x_{k+2}, \dots, x_{m}, x_{1}, \dots, x_{k}).
\]
\noindent
Given $\bk = (k_1,\ldots,k_n)$ with $k_i \in \{0, \ldots, m-1\}$, 
let $\sigma_{\blds k} = \sigma_{k_1} \otimes \cdots \otimes \sigma_{k_n}$ be the 
map from the set ${\cal{A}}^{n \times m}$ of data matrices to itself defined by applying 
$\sigma_{k_i}$ to the $i$th row of $\bX$, namely,
\[
\sigma_{\blds k}(\bX) \ = \ (\sigma_{k_1}(\bX_{1 \cdot}), \ldots, \sigma_{k_n}(\bX_{n \cdot}))^t
\]
The cyclic shift testing procedure of Walter et al.\ (2011) \nocite{walter2011} is as follows.  

\vskip.2in

\noindent
{\bf Cyclic shift procedure to assess the statistical significance of $\tbX$}

\begin{enumerate}

\item Let $\sigma^1(\cdot), \dots, \sigma^N(\cdot)$ be random cyclic shifts of the form 
$\sigma_{k_1} \otimes \cdots \otimes \sigma_{k_n}$, where $k_1,\ldots,k_n$ are independent
and each is chosen uniformly from $\{0,\ldots, m-1\}$.

\item Compute the values $T(\sigma^1(\bX)), \ldots, T(\sigma^N(\bX))$ of the global statistic $T$
at the random cyclic shifts of $\bX$.

\item Define the percentile-based $p$-value
\[
p(T(\bX)) = \max \left( N^{-1} \sum_{l = 1}^N I(T(\sigma^l (\bX)) \geq T(\bX)), 1/N \right).
\]
Here $I(A)$ is the indicator function of the event $A$.

\end{enumerate}

\subsection{Testing Framework}

We wish to assess the performance of the cyclic shift procedure within a formal testing framework. 
To this end, we regard the observed data matrix $\bX$ as an observation from a probability 
distribution $P_m$ on ${\cal A}^{n \times m}$, so that
for any (measurable) set $A \subseteq {\cal A}^{n \times m}$ the probability
$
P_m(A) = \P( \bX \in A ) .
$
As measurements derived from distinct samples are typically independent, we restrict our attention
to the family of measures ${\cal P}$ on ${\cal A}^{n \times m}$ under which the rows of $\bX$ are 
independent.  

Let ${\cal P}_0 \subseteq {\cal P}$ be the sub-family of ${\cal P}$ corresponding to 
the null hypothesis that $\bX$ has no atypical markers, {\em i.e.}, no markers exhibiting coordinated 
activity across samples.  One may define ${\cal P}_0$ in a variety of ways, but the simplest is 
to let ${\cal P}_0$ be the set of distributions $P_m \in {\cal P}$ such that
the rows of $\bX$ are stationary and ergodic under $P_m$; 
independence of the rows follows from the definition of ${\cal P}$.
Under ${\cal P}_0$ the columns of $\bX$ are stationary and ergodic, and the same is true of the local statistics
$s_j$, which are identically distributed and have constant mean and variance.  
Thus under ${\cal P}_0$ no marker is atypical in a strong distributional sense.

Our principal result shows that the $p$-value produced by the cyclic shift procedure 
is approximately consistent for distributions $P_m$ in a subfamily ${\cal P}^* \subseteq {\cal P}_0$.
The family ${\cal P}^*$ includes or approximates many distributions of practical interest,
including finite order Markov chains with discrete or continuous state spaces.  
In order to assess the consistency of the cyclic shift $p$-value 
we carefully define both the target and the induced distributions of the procedure.
As much of what follows concerns probabilities conditional on the observed data matrix, 
we use $\bX$ to denote both the random matrix and its observed realization. 
Given $\bX$ let 
\[
{\cal S}_m(\bX) \ = \ \{ \sigma_{\blds k} (\bX) : \bk \in \{0,\ldots,m-1\}^n \} \ \subseteq \ {\cal A}^{n \times m} 
\]
be the set of all cyclic shifts of $\bX$.
Define the {\em true conditional distribution} to be the conditional
distribution of $P_m$ given ${\cal S}_m(\bX)$, namely
\[
P_{\blds X} (A) \ = \ P_m( A \, | \, {\cal S}_m(\bX)) \ \ \ \ A \subseteq {\cal A}^{n \times m} .
\]
If $P_m$ is discrete with probability mass function $p(\cdot)$ then
\[
P_{\blds X} ( A ) \ = \ 
\frac{1}{\sum_{{\bf Y}' \in {\cal S}_m ({\bf X})} p(\bY')}
\sum_{\blds Y \in A}  p(\bY) \cdot I(\bY \in {\cal S}_m(\bX)) .
\]
If $P_m$ has probability density function $f(\cdot)$ then $P_{\blds X}$ may be defined in a similar fashion.

In the cyclic shift procedure, matrices are selected uniformly at random 
from the set ${\cal S}_m(\bX)$ of cyclic shifts of the observed data matrix $\bX$.  
The associated {\textit{cyclic conditional distribution}} has the form
\[
Q_{\blds X} (A)
\ =\ 
\sum_{\blds Y \in A} \frac{1}{\vert {\cal S}_m(\bX) \vert} \cdot I(\bY \in A) \ \ \ A \subseteq {\cal A}^{n \times m}.
\]
Under mild conditions the $m^n$ cyclic shifts 
of $\bX$ are distinct with high probability when $m$ is large (see Lemma \ref{lem3} in Section \ref{sec4}).  In this
case, the cyclic conditional distribution may be written as
\[
Q_{\blds X} ( A ) 
\ = \ 
\sum_{\blds Y \in A}  \frac{1}{m^n} \cdot I(\bY \in {\cal S}_m(\bX)) 
\ = \ 
\frac{1}{m^n} \cdot |A \cap {\cal S}_m(\bX)| \ \ \ \ A \subseteq {\cal A}^{n \times m} .
\]
The distribution of the cyclic shift $p$-value is given by
\[
p(T(\bX)) \sim \max( N^{-1} \, \mbox{Bin}(N,\alpha), 1/N) .
\]
Here $\alpha = Q_{\blds X} ( T \geq t_0 )$
where $t_0$ is the observed value of $T(\bX)$, and $T \geq t_0$ represents the event 
$\{\bY : T(\bY) \geq t_0 \}$.  Note that as the number $N$ of cyclic shifts increases, the $p$-value
$p(T(\bX))$ will converge in probability to $Q_{\blds X} ( T \geq t_0 )$

\subsection{Principal Result}
\label{sec:PR}

Our principal result requires an invariance condition on the global statistic $T$.  Informally, the condition ensures
that $T$ does not give special treatment to any column of the data matrix.

\vskip.1 in

\noindent
{\bf Definition:} A statistic $T : {\cal A}^{n \times m} \to \real$ is 
{\em invariant under constant shifts} if $T(\bX) = T(\bX')$
whenever $\bX'$ is obtained from $\bX$ by applying the {\em same} cyclic shift $\sigma_k(\cdot)$ 
to each row of $\bX$.

\vskip.1in
\noindent
The maximum column sum statistic used in the cyclic shift testing procedure is clearly
invariant under constant shifts.  More generally, any statistic of the form 
$T(\bX) = g(h(\bX_{\cdot 1}), \ldots, h(\bX_{\cdot m}))$ where $h: {\cal A}^n \to {\cal B}$
is an arbitrary local statistic (not necessarily a sum),
and $g: {\cal B}^m \to \real$ is invariant under cyclic shifts will be invariant
under constant shifts.  The 
following result establishes the asymptotic validity of the cyclic shift procedure in this
general setting.

\begin{theorem}
\label{thm1}
Let $\bX$ be a random $n \times m$ matrix whose rows $\bX_{i \cdot}$ are
independent copies of a first-order stationary ergodic Markov chain with countable 
state space ${\cal{A}}$ and transition probabilities $p(u | v)$.  Suppose that
\begin{equation}
\label{conds}
\max_{u, v \in {\cal{A}}} \ p(u | v) < 1
\ \mbox{ and } \ 
\frac{p_1(u) \, p_1(v)}{p_2(u, v)} < \infty \mbox{ for each $u,v \in {\cal{A}}$}
\end{equation}
where in the second condition we define $0 / 0$ to be $0$.  
Here $p_1(\cdot)$ and $p_2(\cdot,\cdot)$ denote the one- and two-dimensional marginal distributions
of the Markov chain, respectively.  For $m \geq 1$ let $T_m : {\cal A}^{n \times m} \to \real$
be a statistic that is invariant under constant shifts.
Then
\[
\max_{B \subseteq {\mathbb{R}}} 
\vert P_{\blds X} ( T_m \in  B ) - Q_{\blds X} ( T_m \in B ) \vert
\]
\noindent
tends to zero in probability as $m$ tends to infinity.  
\end{theorem}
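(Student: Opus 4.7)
The plan is to exploit the invariance of $T_m$ under constant cyclic shifts to reduce the problem to a quotient space with only $m^{n-1}$ (rather than $m^n$) orbits, on which the cyclic conditional distribution is exactly uniform and the true conditional distribution turns out to be close to uniform in total variation. I would first invoke Lemma~\ref{lem3} (which uses the first part of \eqref{conds}) to assume with probability tending to one that the $m^n$ cyclic shifts of $\bX$ are distinct, so that $Q_{\blds X}$ is uniform on ${\cal S}_m(\bX)$, and, by independence of rows, $P_{\blds X}$ assigns to $\sigma_{\blds k}(\bX)$ a mass proportional to $\prod_{i=1}^n p(\sigma_{k_i}(\bX_{i\cdot}))$. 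Since the event $\{T_m \in B\}$ is invariant under the diagonal action $g\cdot(k_1,\dots,k_n)=(k_1+g,\dots,k_n+g)\bmod m$, it suffices to bound the total variation distance between the orbit-pushforwards $\tilde P$ and $\tilde Q$; after gauge-fixing $k_1=0$, $\tilde Q$ is uniform on $\{0,\dots,m-1\}^{n-1}$.

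Next I would compute $\tilde P$ explicitly via the Markov structure. Telescoping $p(\bX_{i\cdot})=p_1(x_1)\prod_j p(x_{j+1}\mid x_j)$ shows that for a row $\mathbf{x}$ and each $k\ge 1$,
\[
\frac{p(\sigma_k(\mathbf{x}))}{p(\mathbf{x})} \;=\; \frac{\phi(x_m,x_1)}{\phi(x_k,x_{k+1})}, \qquad \phi(u,v):=\frac{p_2(u,v)}{p_1(u)\,p_1(v)}.
\]
Hence under $P_{\blds X}$ the row shifts $K_1,\dots,K_n$ are conditionally independent with $K_i\sim\mu_i$, where $\mu_i(k)\propto 1/\phi(\bX_{i,k},\bX_{i,k+1})$ for $k\ge 1$ (with a negligible correction at $k=0$ and the common factor $\phi(\bX_{i,m},\bX_{i,1})$ cancelling in the normalization). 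The second part of \eqref{conds} yields $\E[1/\phi(X_1,X_2)]=1$ under the stationary law, so Birkhoff's ergodic theorem applied row by row gives $m\,\mu_i(k)\approx g(\bX_{i,k},\bX_{i,k+1})$ where $g:=1/\phi$ has unit mean.

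With $\tilde P(k_2,\dots,k_n)=\sum_{g=0}^{m-1}\mu_1(g)\prod_{i\ge 2}\mu_i(k_i+g)$, I would bound $d_{\mathrm{TV}}(\tilde P,\tilde Q)$ via a $\chi^2$ computation:
\[
\chi^2(\tilde P\,\|\,\tilde Q)+1 \;=\; m^{n-1}\sum_{\blds k}\tilde P(\blds k)^2 \;=\; m^{n-1}\sum_{h=0}^{m-1}\prod_{i=1}^{n}\alpha_i(h),
\]
where $\alpha_i(h):=\sum_k\mu_i(k)\mu_i(k+h)$ is the autocorrelation of $\mu_i$. Writing $\mu_i=(1+\epsilon_i)/m$ with $\sum_k\epsilon_i(k)=0$ and expanding $\prod_i\alpha_i(h)$ and then summing over $h$, the leading term equals $1$, the first-order cross terms vanish by centering, and higher-order terms reduce to products over subsets $S\subseteq\{1,\dots,n\}$ with $|S|\ge 2$ of row-wise ergodic averages of the form $m^{-1}\sum_k\epsilon_i(k)\epsilon_i(k+h)$. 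Independence of rows, combined with the ergodic theorem for the joint process $\{(\bX_{1,k},\dots,\bX_{n,k})\}_k$, then forces each such term to $0$ in probability, giving $\chi^2(\tilde P\,\|\,\tilde Q)\to 0$ and hence $d_{\mathrm{TV}}(\tilde P,\tilde Q)\to 0$ by Pinsker's inequality.

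The main obstacle is this last step: while pointwise-in-$h$ ergodic convergence is immediate row by row, one must control a sum over $m$ lags of a product over $n$ independent factors and extract enough cancellation from the centering $\sum_k\epsilon_i=0$ to defeat the $m^{n-1}$ prefactor. The cleanest route is to take expectations row by row (exploiting independence to factor cross-row averages), apply Birkhoff individually, and bound the residual variance via a truncation of $1/\phi$, which is a.s.\ finite by the second part of \eqref{conds} but need not be $L^2$ in the countable-state setting. Using $\chi^2$ rather than a direct $L^1$ bound is important because pointwise ergodic convergence cannot be upgraded to a uniform bound over the $m^{n-1}$ orbits without further structure.
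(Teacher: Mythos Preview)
Your reductions through Lemma~\ref{lem3}, the explicit likelihood-ratio formula $p(\sigma_k(\mathbf x))/p(\mathbf x)=\phi(x_m,x_1)/\phi(x_k,x_{k+1})$, and the passage to the diagonal quotient are all correct and match the paper. The gap is in the final analytic step, and specifically in your last paragraph.

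Your $\chi^2$ route requires second-moment control of $\rho:=1/\phi$: already the diagonal term $\alpha_i(0)=\sum_k\mu_i(k)^2\approx m^{-2}\sum_k\rho(\bX_{i,k},\bX_{i,k+1})^2$ is governed by $\E\rho^2$, and the cross terms $m^{-3}\sum_h c_i(h)c_j(h)$ involve the autocovariance of $\rho$, which is only defined when $\rho\in L^2$. Condition~\eqref{conds} gives only $\rho<\infty$ pointwise and $\E\rho=1$; nothing more. Truncation is not free here, because the error $d_{\mathrm{TV}}(\tilde P,\tilde P^{(M)})$ must be controlled \emph{after} the $m^{n-1}$ blow-up, and you have not indicated how to do this. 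As written, the argument does not close under the stated hypotheses.

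More importantly, your claim that ``a direct $L^1$ bound'' cannot give the needed uniformity over orbits is the misdiagnosis. The paper does exactly that. Averaging over the diagonal action first (before bounding) gives, for every shift-invariant $A$,
\[
\bigl|P_{\blds X}^o(A)-Q_{\blds X}^o(A)\bigr|\ \le\ \frac{1}{m^n}\sum_{\blds r\in[m]^n}\Bigl|\frac{1}{m}\sum_{k\in[m]}\prod_{i=1}^n\rho_{r_i+k}(\bX_{i\cdot})-1\Bigr|\ +\ o_P(1),
\]
so by Markov it suffices to show $\max_{\blds r}\E\bigl|\,m^{-1}\sum_k\prod_i\rho_{r_i+k}-1\bigr|\to 0$. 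For fixed $\blds r$ there are at most $n$ indices $k$ at which some $r_i+k\equiv 0$; splitting $[m]$ at these points yields at most $n+1$ blocks, and on each block the vector of shifted indices has no modular wrap-around, so by row independence and row stationarity the product process $\prod_i\rho_{r_i+k}$ is equal in distribution to a contiguous stretch of the stationary ergodic product process $\prod_i U_i(k)$. The $L^1$ ergodic theorem then controls each block average in $L^1$, and since there are at most $n+1$ blocks (with $n$ fixed), the bound is uniform in $\blds r$. This is the content of the paper's Lemma~\ref{shift-lln}. No second moments, no $\chi^2$, and no truncation are needed; the $L^1$ ergodic theorem is precisely the upgrade of pointwise convergence you said was unavailable.
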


The first condition in (\ref{conds}) ensures that there are not deterministic transitions between
the states of the Markov chain.  The second condition can be expressed equivalently
as $p_2(u,v) = 0$ implies $p_1(u) \, p_1(v) = 0$.
The proof of Theorem \ref{thm1} is given in Section \ref{sec4}.  
As an immediate corollary of the theorem, 
we find that
\[
\sup_{t} 
\vert P_{\blds X} ( T_m \geq t ) - Q_{\blds X} ( T_m \geq t ) \vert
\]
tends to zero in $P_m$-probability as $m$ tends to infinity.  
Thus, under the conditions of the theorem, when $m$ and $N$ are
large, the percentile based $p$-value $p(T(\bX)$ will be close to the true conditional probability
$P_{\blds X} ( T_m \geq t_0 )$ that $T_m(\bX)$ exceeds the observed value of $T_m$. 
If we define
$Q_m (A)$ to be $\E Q_{\blds X} ( A )$, where the expectation is taken under $P_m$, then 
conditional convergence also yields the unconditional result
\[
\sup_{t} 
\vert P_m( T_m \geq t ) - Q_m( T_m \geq t ) \vert \to 0
\]
as $m$ tends to infinity.  Thus, under the assumptions of Theorem \ref{thm1}, 
the percentile based $p$-value provides asymptotically correct type I error rates.

Theorem \ref{thm1} can be extended in a number of directions.  Under conditions similar to 
those in (\ref{conds}) the theorem extends to matrices $\bX$ whose rows are
independent copies of a $k$th order ergodic 
Markov chain, where $k \geq 2$ is fixed and finite.
The theorem can also be extended to settings in which
the rows of $\bX$ are independent stationary ergodic Markov chains with {\em different} transition probabilities.
In this case we require that the conditions (\ref{conds}) hold for each row-chain.

Theorem \ref{thm1} can also be extended to the setting in which the rows of $\bX$ are
independent copies of a first-order stationary ergodic Markov chain with a continuous 
state space and a transition probability density $f(u | v)$.  The existence of the transition
probability density obviates the need for the first condition in (\ref{conds}) and the 
analysis of Lemmas \ref{lem2} and \ref{lem3} in Section \ref{sec4}.  The second condition 
of (\ref{conds}) is replaced by the assumption
\begin{equation}
\label{OP1C}
\frac{f_1(X_{1}) \, f_1(X_{m})}{f_2(X_{1}, X_{m})} \ = \ O_P(1) ,
\end{equation}
where $f_1(\cdot)$ and $f_2(\cdot,\cdot)$ denote the one- and two-dimensional marginal densities
of the Markov chain, respectively.  
Markovity and ergodicity ensure that $(X_1,X_m)$ converges weakly to a pair $(X,X')$ consisting of independent copies
of $X_1$, and therefore condition (\ref{OP1C}) holds if the ratio 
$f_1(u) \, f_1(v) / f_2(u,v)$ is continuous on $\real^2$.  Thus Theorem \ref{thm1} applies, for example,
to standard Gaussian AR(1) models.  As in the discrete case, one may extend the theorem to settings in which
the rows of $\bX$ are independent stationary ergodic Markov chains with {\em different} transition probabilities,
provided that (\ref{OP1C}) holds for each row-chain.

\subsection{Illustration of Resampling Distributions}

Here we present simulation results illustrating the resampling distributions 
$P_{\blds X}(A)$ and $Q_{\blds X}(A)$ defined above.  Each simulation was conducted using an 
$n \times m$ matrix $\bX$ with independent, identically distributed
rows generated by a stationary first-order $r$-state Markov chain with a fixed transition matrix $M$.
Figure \ref{fig1} shows empirical cumulative distribution functions (cdfs) $P_{\blds X}(T < t)$ and $Q_{\blds X}(T < t)$ based on simulations conducted with $r = 5,\ n = 4$, and $m = 10$ or 50.  Each panel is based on an observed matrix $\bX$ produced by the Markov chain, and the results presented here are representative of those obtained from other simulations.  Based on Theorem \ref{thm1}, we expect the cdfs to converge as the number of columns 
$m$ increases. Accordingly, the two curves in each panel of part B of Figure \ref{fig1} 
($m$ = 50) exhibit a greater level of concordance than those in part A ($m$ = 10).  Additional simulation results based on an AR(1) model are presented in Section \ref{app}, the Appendix.

\setcounter{figure}{0}
\begin{figure}[ht]
\begin{center}
\includegraphics[scale = .5]{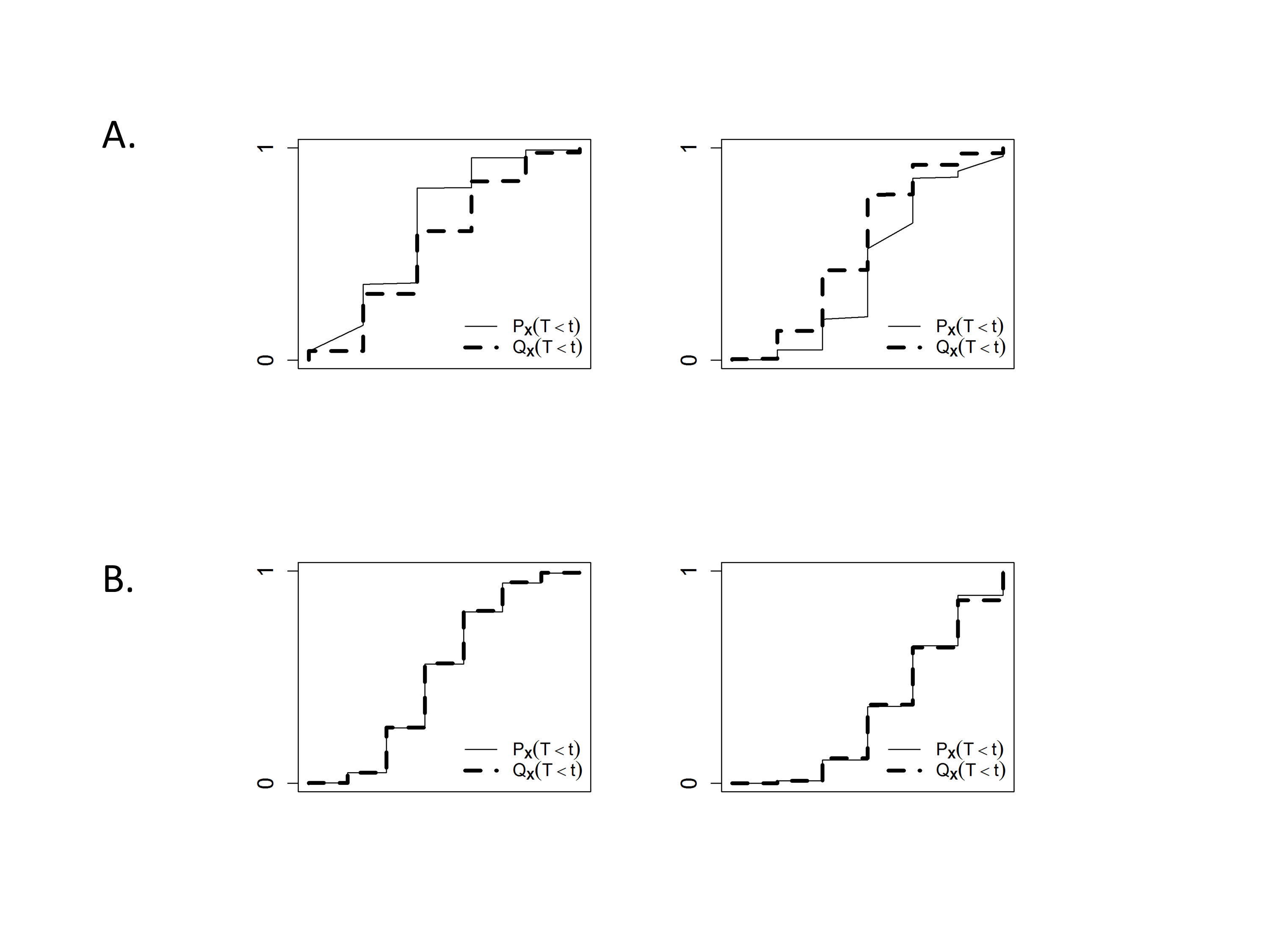}
\caption{Illustration of Resampling Distributions.  Empirical cumulative distribution functions for simulated matrices $\bX$ in which independent rows $\bX_{i \cdot}$ are generated by a first-order finite-state Markov chain.  Each panel corresponds to a simulated $4 \times m$ matrix $\bX$ with $m = 10$ (A) or $m = 50$ (B).}
\label{fig1}
\end{center}
\end{figure}

\section{Application to Genomic Data}
\label{sec3}

In tumor studies DNA copy number values for each subject are measured with respect to a normal reference, typically either a paired normal sample or a pooled reference.  In the autosomes the normal DNA copy number is two.  Underlying genomic instability in tumor tissue can result in DNA copy number gains and losses, and often these changes lead to increased or decreased expression, respectively, of affected genes (Pinkel and Albertson 2005). \nocite{pinkel2005}  Some of these genetic aberrations occur at random locations throughout the genome, and these are termed {\textit{sporadic}}.  In contrast, {\textit{recurrent}} aberrations are found in the same genomic region in multiple subjects.  It is believed that recurrent aberrations arise because they lead to changes in gene expression that provide a selective growth advantage.  Therefore regions containing recurrent aberrations are of interest because they may harbor genes associated with the tumor phenotype.  Distinguishing sporadic and recurrent aberrations is largely a statistical issue, and the cyclic shift procedure was designed to perform this task.

DNA methylation values for a given subject are also measured with respect to a paired or pooled normal reference.  Although DNA methylation values are not constant across the genome, even in normal tissue, at a fixed location they are quite stable in normal samples from a given tissue type.  Epigenetic instability can disrupt normal methylation patterns, leading to methylation gains and losses, and these changes can affect gene expression levels (Laird 2003). \nocite{laird2003}  
Regions of the genome that exhibit recurrent hyper- or hypo-methylation in tumor tissue are of interest. 

\subsection{Peeling}

In many applications more than one atypical marker may be present, 
and as a result multiple columns may produce summary statistics with extreme values.  
In tumor tissue, for example, underlying genomic instability can result in gains and losses 
of multiple chromosomal regions; likewise, epigenetic instability can lead to aberrant 
patterns of DNA methylation throughout the genome.  
In order to identify multiple atypical markers and assess their statistical significance it is necessary
to remove the effect of each discovered marker before initiating a search for the next marker.
This task is carried out by a process known as {\textit{peeling}}.  Several peeling procedures have been
proposed in the literature, including those employed by GISTIC (Beroukhim et al.\ 2007) and DiNAMIC (Walter et al.\ 2011). \nocite{beroukhim2007} \nocite{walter2011}
In the applications here we make use of the procedure described in detail in Walter et al.\ (2011) \nocite{walter2011}.  

\subsection{DNA Copy Number Data}

Walter et al.\ (2011) \nocite{walter2011} used the cyclic shift procedure to analyze the Wilms' tumor data of Natrajan et al.\ (2006). \nocite{natrajan2006}  Here we apply the procedure to the lung adenocarcinoma dataset of
 Chitale et al.\ (2009), \nocite{chitale2009} with $n$ = 192 and $m$ = 40478.  We detected a number of highly significant findings under the null hypothesis that no recurrent copy number gains or losses are present.  Table \ref{tab1} lists the genomic positions of the the three most significant copy number gains and losses, as well as neighboring genes, most of which are known oncogenes and tumor suppressors.  Strikingly, Weir et al.\ (2007) \nocite{weir2007} detected highly significant gains of the oncogenes {\it{TERT}}, {\it{ARNT}}, and {\it{MYC}} in their comprehensive investigation of the disease, each of which appears in Table \ref{tab1}.  The loss results for chromsomes 8 and 9 in Table \ref{tab1} are also highly concordant with previous findings of Weir et al.\ (2007), \nocite{weir2007} and Wistuba et al.\ (1999). \nocite{wistuba1999}  Weir et al.\ (2007) detected chromosomal loss in a broad region of 13q that contains the locus in Table \ref{tab1}, but it is not clear if the target of this region is the known tumor-suppressor {\textit{RB1}} or some other gene.

\begin{table}
\begin{center}
\caption{Genomic locations of the three most significant DNA copy number gains (top table) and losses (bottom table) found by applying the cyclic shift procedure to the lung adenocarcinoma dataset of Chitale et al.\ (2009).}

\medskip
\begin{small}
\label{tab1}
\begin{tabular}{crc} \hline
Chromosome & Gain Locus (bp) & Gene \\ \hline
5p15 & 967984 & \textit{TERT} \\
1q21 & 149346163 & \textit{ARNT} \\
8q24 & 128816933 & \textit{MYC} \\
\end{tabular}

\medskip
\begin{tabular}{crc} \hline
Chromosome & Loss Locus (bp) & Gene\\ \hline
8p23 & 2795183 & \textit{CSMD1} \\
13q11 & 19254995 & \textit{PSPC1} \\
9p21 & 21958070 & \textit{CDKN2A} \\
\end{tabular}
\end{small}
\end{center}
\end{table}

\subsection{DNA Methylation Data}

Using unsupervised clustering techniques, Fackler et al.\ (2011) \nocite{fackler2011} found an association between methylation patterns and estrogen-receptor status in a cohort of breast cancer tumors.  This cohort consisted of 20 tumor/normal pairs, and we used
differences in methylation signal between tumor and normal tissue as the observations.  We applied the cyclic shift procedure to the resulting differences to detect loci that exhibited recurrent hyper- or hypomethylation in tumors.  As shown in Table \ref{tab2}, the most significant hypermethylation sites occur in {\it{ABCA3}}, {\it{GALR1}}, and {\it{NID2}}, and these genes have previously been found to be highly methylated in lung adenocarcinoma, head and neck squamous cell carcioma, and bladder cancer, respectively,
by Selamat et al.\ (2012), \nocite{selamat2012} Misawa et al.\ (2008), \nocite{misawa2008} and Renard et al.\ (2009). \nocite{renard2009}  Hypomethylation of the transcription factor {\it{MYT1}} on chromosome 20 was detected; this is notable because Vir\'{e} et al.\ (2006) 
found that {\it MYT1} could be activated via decreased methylation.

\begin{table}
\begin{center}
\caption{Genomic locations of the three most significant hypermethylation (top table) and hypomethylation (bottom table) sites found by applying the cyclic shift procedure to the breast cancer dataset of Fackler et al.\ (2011).}

\medskip
\begin{small}
\label{tab2}
\begin{tabular}{crc} \hline
Chromosome & Gain Locus (bp) & Gene \\ \hline
16p13 & 2331829 & \textit{ABCA3} \\
18q23 & 73091357 & \textit{GALR1} \\
14q22 & 51605897 & \textit{NID2} \\
\end{tabular}

\medskip
\begin{tabular}{crc} \hline
Chromosome & Gain Locus (bp) & Gene \\ \hline
20q13 & 62266251 & \textit{MYT1} \\
3q24 & 144378065 &  \textit{SLC9A9} \\
1q21 & 150565702 & \textit{MCL1} \\
\end{tabular}
\end{small}
\end{center}
\end{table}

\subsection{Meta-Analysis of Genomewide Association Studies}

Genome-wide association studies (GWAS) are used to identify genetic markers, typically {\textit{single nucleotide polymorphisms}} (SNPs), that are associated with a disease of interest.  When conducting a GWAS involving a common disease and alleles with small to moderate effect sizes, large numbers of cases and controls are required to have adequate power to detect disease SNPs (Pfeiffer et al.\ 2009). \nocite{pfeiffer2009}  

The Welcome Trust Case Control Consortium (WTCCC 2007) performed a genome-wide association study of seven common familial diseases - bipolar disorder (BD), coronary artery disease (CAD), Crohn's disease (CD), hypertension (HT), rheumatoid arthritis (RA), type I diabetes (T1D), and type II diabetes (T2D) - based on an analysis of 2000 separate cases for each disease and a  set of 3000 controls.  
We applied the inverse of the standard normal cumulative distribution function to the Cochran-Armitrage trend test $p$-values from the WTCCC study, a transformation that produces z-scores whose values are similar those exhibited by a stationary process.  We then analyzed the matrix $\bX$ whose entries are negative thresholded z-scores arranged in rows corresponding to the seven disease phenotypes.  As seen in Figure \ref{fig2}, a number of regional markers on chromosome 6 produce extremely large column sums.  These markers lie in the major histocompatability complex (MHC), which is noteworthy because MHC class II genes have been shown to be associated with autoimmune disorders, including RA and T1D (Fernando et al.\ 2008).\nocite{fernando2008}  When applied to $\bX$, cyclic shift testing identified several highly significant apparently pleiotropic SNPs in the MHC region that produced large entries in the rows corresponding to both RA and T1D, including rs9270986, which is upstream of the RA and T1D susceptibility gene {\textit{HLA-DRB1}}.  

The WTCCC  dataset serves as a proof of principle for cyclic shift applied to GWAS studies, although the use of a common set of controls may create modest additional correlation not fully captured in the cyclic shifts. We note that the cyclic shift procedure appplied to GWAS is sensitive only to small $p$-values that occur in multiple studies.  Thus the procedure is qualitatively different from typical meta-analyses, such as Zeggini et al.\ (2008), \nocite{zeggini2008} which
can be sensitive to large observed effects form a single study.

\setcounter{figure}{1}
\begin{figure}[ht]
\begin{center}
\includegraphics[scale = .5]{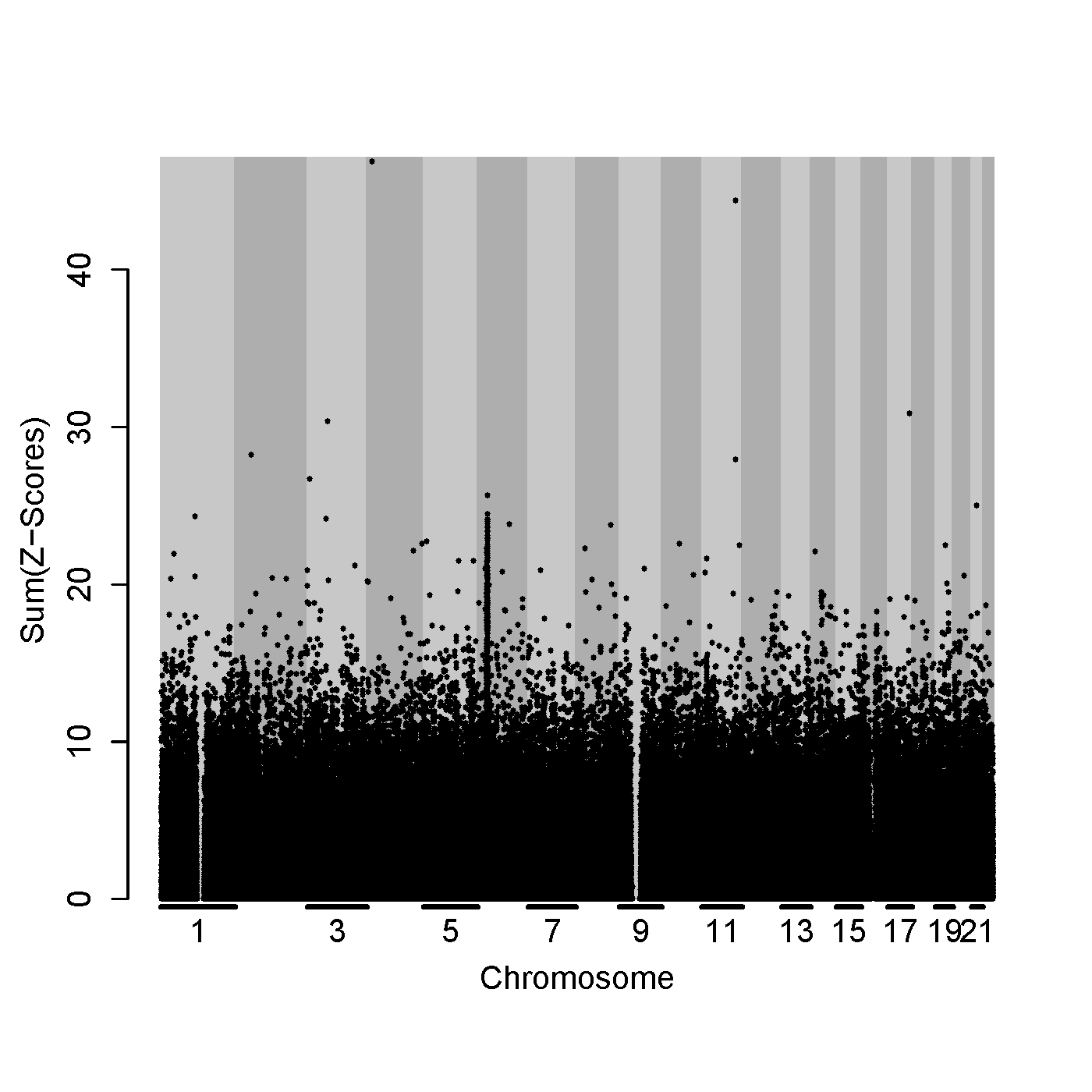}
\caption{Cyclic Shift Testing Identifies Pleiotropic Single Nucleotide Polymorphisms.  Marker-specific summary statistics were obtained from the Welcome Trust Case Control Consortium study and plotted genome-wide.  Numerous regional markers in the multihistocompatability complex region of chromosome 6 exhibit large summary statistics, including several markers that were highly significant under cyclic shift testing and were associated with multiple disease phenotypes.}
\label{fig2}
\end{center}
\end{figure}

\section{Proof of Theorem \ref{thm1}}
\label{sec4}

Let $\bX$ be a random $n \times m$ matrix whose rows
$\bX_{1 \cdot}, \dots, \bX_{n \cdot}$ are independent
realizations of a first-order stationary ergodic Markov chain with countable state space ${\cal{A}}$.
Denote the distribution of $\bX$ in ${\cal A}^{n \times m}$ by $P_m$. 
Let $p_1( \cdot )$ and $p( \cdot \vert \cdot )$ denote, respectively, the stationary distribution and the
one-step transition probability of the Markov chain defining the rows of $\bX$.
Let $p_l( \cdot )$ denote the joint probability mass function
of $l$ contiguous variables in the chain.
Thus the vectors $\bX_{i \cdot}$ have common probability mass
function
\begin{equation}
\label{MC}
p_m(x_0, \dots, x_{m - 1}) \ = \ p_1(x_0) \prod_{j = 1}^{m - 1} p(x_j \vert x_{j - 1}) .
\end{equation}
In what follows we assume that (\ref{conds}) holds.
The ergodicity assumption on the Markov chain ensures 
that the joint probability mass function of $(X_{0}, X_{m-1})$ converges 
to the joint probability mass function of the pair $(X,X')$ where $X, X' \in {\cal A}$ are independent with the
same distribution as $X_{0}$.  It follows that
\begin{equation}
\label{OP1}
\frac{p_1(X_{i,0}) \, p_1(X_{i,m-1})}{p_2(X_{i,0}, X_{i,m-1})} \ = \ O_P(1) \ \ 1 \leq i \leq n .
\end{equation}
In other words, for each row $i$ the ratio in (\ref{OP1}) is stochastically bounded under 
$P_m$ as $m$ tends to infinity.  

Suppose for the moment that $m$ is fixed.  For any integer $r$, define the cyclic 
shift $\sigma_r : {\cal A}^m \to {\cal A}^m$ on sequences
of length $m$ by
\[
\sigma_{r}(x_0, x_1, \dots, x_{m - 1}) = (x_{[r]}, x_{[r + 1]}, \dots, x_{[r + (m - 1)]})
\]
where $[k] = k \mbox{ mod } m$.  We index vectors as $(x_0, x_1, \dots, x_{m - 1})$
rather than $(x_1, x_2, \dots, x_m)$,
as was done in the body of the present manuscript, because
this allows us to write the subscripts of the shifted vector in terms of $[k]$,
substantially reducing notation.
For each $\br = (r_1, \dots, r_n) \in {\mathbb{Z}}^n$ define
$\sigma_{\blds r}(\bX)$ to be the $n \times m$ matrix
with rows $\sigma_{r_1}(\bX_{1 \cdot}), \dots, \sigma_{r_n}(\bX_{n \cdot})$.
If ${\br}, {\bs} \in {\mathbb{Z}}^n$, then it is easy to verify that
\[
( \sigma_{\blds r} \circ \sigma_{\blds s}) (\bX)
\ = \
(\sigma_{\blds s} \circ \sigma_{\blds r}) (\bX)
\ = \
(\sigma_{{\blds r} + {\blds s}}) (\bX) .
\]
Let ${\cal S}_m (\bX) = \{ \sigma_{\blds r} (\bX) : \br \in \mathbb{Z}^n \}$ be the set of cyclic shifts of $\bX$.

Let $P_{\blds X}$ and $Q_{\blds X}$ be the true conditional and cyclic conditional distributions given ${\cal S}_m (\bX)$,
defined by $P_{\blds X} (A) = P_m( A \, | \, {\cal S}_m(\bX))$ and 
$Q_{\blds X} ( A ) = m^{-n} \, |A \cap {\cal S}_m(\bX)|$, respectively.
In order to compare the distributions $P_{\blds X}$ and $Q_{\blds X}$ we introduce two closely 
related distributions, $P_{\blds X}^o$ and
$Q_{\blds X}^o$, that are more amenable to analysis.  Let $P_{\blds X}^o$ be a (random) measure on
${\cal{A}}^{n \times m}$ defined by
\[
P_{\blds X}^o( A )
\ = \
\sum_{{\blds r} \in [m]^n} \eta(\sigma_{\blds r}(\bX)) \, I ( \sigma_{\blds r}(\bX) \in A ),
\]
where $[m] = \{0, 1, \ldots, m-1\}$ and
\[
\eta(\sigma_{\blds r}(\bX))
\ = \
\prod_{i=1}^n
\left[
\frac{ p_m(\sigma_{r_i}(\bX_{i \cdot})) }{ \sum_{s \in [m]} p_m(\sigma_s(\bX_{i \cdot})) }
\right].
\]
Let $Q_{\blds X}^o$ be a (random) measure on ${\cal{A}}^{n \times m}$ defined by
\[
Q_{\blds X}^o (A)
\ = \
\sum_{{\blds r} \in [m]^n} \frac{1}{m^n} \, I ( \sigma_{\blds r} (\bX) \in A ) .
\]
One may readily verify that
$P_{\blds X}^o({\cal{A}}^{m \times n}) =
Q_{\blds X}^o({\cal{A}}^{m \times n}) = 1$,
so both $P_{\blds X}^o$ and $Q_{\blds X}^o$ are valid probability measures on ${\cal{A}}^{m \times n}$.

We will say that the set of cyclic shifts ${\cal{S}}_m(\bX)$ is {\it full} if its cardinality
is equal to $m^n$, or equivalently, if all cyclic shifts of $\bX$ are distinct.

\begin{lemma}
If ${\cal{S}}_m(\bX)$ is full, then
(a) $P_{\blds X}^o = P_{\blds X}$
and (b) $Q_{\blds X}^o = Q_{\blds X}$.
\end{lemma}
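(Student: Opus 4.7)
The plan is to prove both identities by unwinding the definitions, using two basic facts: (i) under $P_m$ the rows of $\bX$ are independent, so the joint pmf of the matrix factors as $\prod_i p_m(\bX_{i\cdot})$; and (ii) fullness of ${\cal S}_m(\bX)$ guarantees that the $m^n$ shifted matrices $\sigma_{\blds r}(\bX)$, $\br \in [m]^n$, are pairwise distinct, so any sum indexed by $\br$ equals the corresponding sum over the distinct elements of ${\cal S}_m(\bX)$ with no overcounting.

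For part (a), I would start from the definition of conditional probability,
\[
P_{\blds X}(A) \ = \ \frac{P_m(A \cap {\cal S}_m(\bX))}{P_m({\cal S}_m(\bX))}.
\]
Fullness lets me rewrite both numerator and denominator as sums over $\br \in [m]^n$ of point masses $P_m(\{\sigma_{\blds r}(\bX)\})$, with the numerator carrying an extra indicator $I(\sigma_{\blds r}(\bX) \in A)$. Row independence then gives $P_m(\{\sigma_{\blds r}(\bX)\}) = \prod_{i=1}^n p_m(\sigma_{r_i}(\bX_{i\cdot}))$, and the denominator factors as $\prod_{i=1}^n \sum_{s \in [m]} p_m(\sigma_s(\bX_{i\cdot}))$ by interchanging sum and product over independent coordinates. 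Substituting these expressions and regrouping the ratio inside the $\br$-sum produces exactly the weight $\eta(\sigma_{\blds r}(\bX))$ defined for $P_{\blds X}^o$, yielding $P_{\blds X}(A) = P_{\blds X}^o(A)$.

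For part (b), fullness gives directly $|A \cap {\cal S}_m(\bX)| = \sum_{\blds r \in [m]^n} I(\sigma_{\blds r}(\bX) \in A)$, since each indicator on the right picks out a distinct element of ${\cal S}_m(\bX)$. Multiplying by $m^{-n}$ on both sides recovers $Q_{\blds X}(A) = Q_{\blds X}^o(A)$.

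There is no real technical obstacle; the lemma is essentially bookkeeping. The one point that must be handled carefully is that the weights $\eta$ in $P_{\blds X}^o$ and the uniform mass $m^{-n}$ in $Q_{\blds X}^o$ are calibrated so that sums indexed by $\br \in [m]^n$ correctly represent probabilities and counts on ${\cal S}_m(\bX)$; this is precisely what fails without fullness and precisely what the hypothesis rules out. Once that is observed, both equalities follow in one line from the factorizations above.
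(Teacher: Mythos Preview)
Your proposal is correct and follows essentially the same route as the paper's own proof: you use fullness to write both numerator and denominator as sums over $\br\in[m]^n$ without overcounting, apply row independence to factor $P_m(\{\sigma_{\blds r}(\bX)\})$ and then interchange sum and product to obtain the denominator $\prod_i\sum_s p_m(\sigma_s(\bX_{i\cdot}))$, which recovers the weights $\eta$; part (b) is likewise the same one-line counting argument.
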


\begin{proof}
(a)  For any $A \subseteq {\cal{A}}^{n \times m}$ we may write $P_{\blds X}(A)$ as
\begin{equation}
\label{cond1}
\frac{P_m(A \cap {\cal{S}}_m(\blds X))}{P_m({\cal{S}}_m(\bX))} = \sum_{{\blds r} \in [m]^n} \frac{P_m(\sigma_{\blds r}(\bX))}{P_m({\cal{S}}_m(\bX))} I(\sigma_{\blds r}(\bX) \in A).
\end{equation}
\noindent
Since ${\cal{S}}_m(\bX)$ is full, 
\[
P_m({\cal{S}}_m(\bX)) = P_m \left( \bigcup_{{\blds r} \in [m]^n} \sigma_{\blds r}(\bX) \right) = \sum_{{\blds r} \in [m]^n} P_m(\sigma_{\blds r}(\bX)).  
\]
The independence of the rows of $\bX$ allows us to write the last expression as $\displaystyle{\sum_{\blds r \in [m]^n} \prod_{i = 1}^n p_m(\sigma_{r_i}(\bX_{i \cdot}))}$, but this may be rewritten as $\displaystyle{\prod_{i = 1}^n \sum_{s \in [m]} p_m(\sigma_{s}(\bX_{i \cdot}))}$.  Therefore (\ref{cond1}) is equivalent to
\[
\sum_{{\blds r} \in [m]^n} \left[ \prod_{i = 1}^n \frac{p_m(\sigma_{r_i}(\bX_{i \cdot}))}{\sum_{s \in [m]} 
p_m(\sigma_s(\bX_{i \cdot}))} \right] I(\sigma_{\blds r}(\bX) \in A) = P_{\blds X}^o(A).
\]

\noindent
(b) There are $m^n$ elements in ${\cal{S}}_m (\bX)$ when $\bX$ is full, so for any $A \in \cal{A}$
\[
Q_{\blds X} ( A ) = m^{-n} \, |A \cap {\cal S}_m(\bX)| = \sum_{{\blds r} \in [m]^n} \frac{1}{m^n} \, I ( \sigma_{\blds r} (\bX) \in A ) = Q_{\blds X}^o (A).
\]
\end{proof}

\begin{lemma}
\label{lem2}
Let $\mathrm{\mathbf x} = (x_0, x_1, \dots, x_{m - 1}) \in {\cal{A}}^m$ be a sequence of length $m$.
Let $k$ be the least positive integer such that $\sigma_k(\mathrm{\mathbf x}) = \mathrm{\mathbf x}$.
If $k < m$, then $k$ divides $m$, and $\mathrm{\mathbf x}$ is equal to the repeated concatenation
of a fixed block of length $k$.
\end{lemma}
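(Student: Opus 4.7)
The plan is threefold: (i) verify that $k$ is well-defined using the fact that $\sigma_m$ is the identity; (ii) apply the division algorithm together with the minimality of $k$ to deduce $k \mid m$; and (iii) unroll the relation $\sigma_k(\bx) = \bx$ coordinatewise to read off the repeated-block structure. For (i), I would observe that the cyclic shifts $\{\sigma_j\}$ on length-$m$ sequences satisfy $\sigma_a \circ \sigma_b = \sigma_{a+b}$ with subscripts reduced modulo $m$, and in particular $\sigma_m$ is the identity on ${\cal A}^m$. Hence the set $H = \{j \geq 1 : \sigma_j(\bx) = \bx\}$ contains $m$ and is closed under addition modulo $m$, so its least element $k = \min H$ exists and satisfies $1 \leq k \leq m$.

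For the divisibility step, write $m = qk + r$ with $0 \leq r < k$ via the division algorithm. Iterating $\sigma_k(\bx) = \bx$ gives $\sigma_{qk}(\bx) = \bx$, and combining with $\sigma_m(\bx) = \bx$ through the composition identity yields $\sigma_r(\bx) = \sigma_{m - qk}(\bx) = \bx$. Since $0 \leq r < k$ and $k$ is the least positive element of $H$, we must have $r = 0$, so $k \mid m$ and $m = qk$ for some integer $q \geq 2$ (using the hypothesis $k < m$). For the block-structure step, note that $\sigma_k(\bx) = \bx$ is, coordinatewise, the statement that $x_{(j+k) \bmod m} = x_j$ for every $0 \leq j < m$. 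Because $k \mid m$, this rearranges to $x_{\ell k + i} = x_i$ for each $0 \leq i < k$ and $0 \leq \ell < q$, which is exactly the statement that $\bx$ is the concatenation of $q$ copies of the block $(x_0, \ldots, x_{k-1})$.

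The main obstacle is essentially nil: this is a textbook periodicity argument of the same flavor used to identify the period of an element in a cyclic group. The only point requiring modest care is the modular manipulation $\sigma_r(\bx) = \sigma_{m - qk}(\bx)$, which implicitly uses that the shifts $\{\sigma_0, \sigma_1, \ldots, \sigma_{m-1}\}$ form a cyclic group of order $m$ under composition; this is, however, immediate from the definition of $\sigma_j$ on a length-$m$ sequence and the bracket convention $[k] = k \bmod m$ introduced in Section~\ref{sec4}.
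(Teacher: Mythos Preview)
Your proof is correct and follows essentially the same route as the paper's: both use the division algorithm on $m = qk + r$ together with the group structure of cyclic shifts and the minimality of $k$ to force $r = 0$, and then read off the block structure from the coordinatewise identity $x_{(j+k)\bmod m} = x_j$. The only cosmetic differences are that the paper phrases the divisibility step as a contradiction (assuming $1 \leq r < k$) and leaves the block-structure conclusion implicit, whereas you argue directly and spell out both the well-definedness of $k$ and the concatenation explicitly.
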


\begin{proof}
Suppose to the contrary that $1 \leq k < m$ does not divide $m$.  Then we may write $m = kq + r$,
where $1 \leq r < k$.  Now $\sigma_k({\bx}) = {\bx}$ implies that
$\sigma_{-k}({\bx}) = {\bx}$, and it follows that
\[
\sigma_r({\bx}) = \sigma_{m - kq}({\bx}) = \sigma_m \circ \sigma_{-kq}({\bx}) = {\bx} .
\]
As this contradicts the minimality of $k$, we conclude that $k$ divides $m$.   The second conclusion
follows in a straightforward way from the first.
\end{proof}

\setcounter{corollary}{0}
\begin{corollary}
If $\mathrm{\mathbf x} \in {\cal{A}}^m$ is such that $\sigma_k(\mathrm{\mathbf x}) = \mathrm{\mathbf x}$ for some $1 \leq k < m$, then $\mathrm{\mathbf x}$ contains two disjoint,
equal blocks of length at least $m / 3$.
\end{corollary}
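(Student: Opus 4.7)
The plan is to reduce the claim to the block-periodic structure provided by Lemma \ref{lem2} and then partition the resulting repeated block into two equal halves.

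First, I would note that the hypothesis of the corollary produces \emph{some} $1 \leq k < m$ with $\sigma_k(\mathbf{x}) = \mathbf{x}$, but this $k$ need not be minimal. So I would let $k_0$ be the least positive integer with $\sigma_{k_0}(\mathbf{x}) = \mathbf{x}$; clearly $1 \leq k_0 \leq k < m$. Since $k_0 < m$, Lemma \ref{lem2} applies and yields $k_0 \mid m$ together with a block $B \in \mathcal{A}^{k_0}$ such that $\mathbf{x} = B B \cdots B$ is the concatenation of $q := m/k_0$ copies of $B$. Because $k_0 < m$ we have $q \geq 2$.

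Second, I would construct the two disjoint equal blocks explicitly. Set $\ell = \lfloor q/2 \rfloor \geq 1$ and define
\[
B_1 \ = \ (x_0, x_1, \dots, x_{\ell k_0 - 1}), \qquad B_2 \ = \ (x_{\ell k_0}, x_{\ell k_0 + 1}, \dots, x_{2 \ell k_0 - 1}).
\]
Both blocks sit inside $\mathbf{x}$ because $2\ell k_0 \leq q k_0 = m$, they are disjoint by construction, and each is the concatenation $B^\ell$ of $\ell$ copies of $B$, hence they are equal as sequences.

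Third, I would check the length bound. Each $B_i$ has length $\ell k_0 = \lfloor q/2 \rfloor \cdot (m/q)$, so the required inequality $\ell k_0 \geq m/3$ is equivalent to $3 \lfloor q/2 \rfloor \geq q$. This is elementary: for even $q = 2j$ with $j \geq 1$ it reads $3j \geq 2j$, and for odd $q = 2j+1$ with $j \geq 1$ (i.e.\ $q \geq 3$) it reads $3j \geq 2j + 1$. Thus the bound holds for every $q \geq 2$, with equality precisely when $q = 3$.

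I do not expect a significant obstacle here — the argument is a short combinatorial consequence of Lemma \ref{lem2}. The only point that requires a little care is that the shift $k$ in the corollary's hypothesis may exceed the minimal period, so one must pass to $k_0$ before applying Lemma \ref{lem2}; after that the construction and the verification $3 \lfloor q/2 \rfloor \geq q$ are routine, and the bound $m/3$ is tight in the case $q = 3$.
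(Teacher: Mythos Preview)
Your argument is correct and is precisely the natural way to flesh out the corollary: the paper states this result without proof, as an immediate consequence of Lemma~\ref{lem2}, and your passage to the minimal period $k_0$, concatenation of $\lfloor q/2\rfloor$ copies of the base block, and verification that $3\lfloor q/2\rfloor \geq q$ for all $q\geq 2$ constitute exactly the routine details one would supply. Nothing further is needed.
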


\begin{lemma}
\label{lem3}
If (\ref{conds}) holds, then $P_m({\cal{S}}_m(\mathrm{\mathbf X})\ \text{is full})$ converges to 1 as $m$ tends to infinity.
\end{lemma}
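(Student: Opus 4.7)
The plan is to reduce the ``not full'' event to a statement about a single row, and then bound that row's probability using the first part of hypothesis~(\ref{conds}). The map $\br \mapsto \sigma_{\blds r}(\bX)$ from $[m]^n$ fails to be injective exactly when $\sigma_{r_i}(\bX_{i\cdot}) = \sigma_{s_i}(\bX_{i\cdot})$ for every $i$ for some $\br \ne \bs$, which is equivalent to the existence of a row index $i$ and an integer $1 \leq k < m$ with $\sigma_k(\bX_{i\cdot}) = \bX_{i\cdot}$. By the corollary to Lemma~\ref{lem2}, any such row must contain two disjoint equal blocks of length at least $L := \lceil m/3 \rceil$. Writing $E_i$ for this event on row $i$, independence and identical distribution of the rows then give
\[
P_m({\cal{S}}_m(\bX) \text{ is not full}) \ \leq \ \sum_{i=1}^n P_m(E_i) \ = \ n \, P_m(E_1).
\]

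It remains to show $P_m(E_1) \to 0$. Set $\alpha := \max_{u,v} p(u \vert v) < 1$. The plan is a union bound over the $O(m^2)$ admissible pairs of starting positions $(i, j)$ with $0 \leq i$, $j \geq i + L$, and $j + L \leq m$, combined with the per-pair estimate
\[
P_m \bigl( X_{i+\ell} = X_{j+\ell} \text{ for } 0 \leq \ell \leq L-1 \bigr) \ \leq \ \alpha^L.
\]
To prove the estimate, I would condition on $X_0, \dots, X_{j-1}$ and apply the Markov property: given this history the requested conditional probability factors as $p(X_i \vert X_{j-1}) \prod_{\ell=1}^{L-1} p(X_{i+\ell} \vert X_{i+\ell-1})$, a product of $L$ one-step transition probabilities, each at most $\alpha$. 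Taking expectations and summing over pairs then gives $P_m(E_1) \leq m^2 \alpha^L$, hence
\[
P_m({\cal{S}}_m(\bX) \text{ is not full}) \ \leq \ n \, m^2 \, \alpha^{\lceil m/3 \rceil} \ \longrightarrow \ 0,
\]
since $n$ is fixed and $\alpha < 1$.

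The step I expect to require the most care is the per-pair conditional calculation. A na\"\i ve argument that handles the two length-$L$ blocks separately only produces $\alpha^{L-1}$ per block (one stationary marginal factor together with $L-1$ transition factors) and fails to exploit the matching requirement between the blocks. Conditioning away everything up to position $j-1$ is what converts the first entry of the second block into a forced transition from $X_{j-1}$ to the already-determined value $X_i$, so that all $L$ entries of the second block contribute bounded transition factors. Note that only the first part of (\ref{conds}) enters Lemma~\ref{lem3}; the ratio condition in the second part is reserved for the proof of Theorem~\ref{thm1} itself.
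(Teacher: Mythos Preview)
Your argument is correct and follows essentially the same route as the paper: reduce to a single row, invoke Corollary~1 to obtain two long matching blocks, bound the matching probability via the Markov property and the transition bound $\alpha<1$, and finish with a union bound. The only notable difference is that the paper exploits the periodic structure from Lemma~\ref{lem2} to pin the first block at position~$0$, so its union bound runs over a single offset $r$ and yields $m\,\rho^{m/3-1}$, whereas you take Corollary~1 at face value and union over both starting positions, obtaining the coarser but equally sufficient $n\,m^{2}\,\alpha^{\lceil m/3\rceil}$.
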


\begin{proof}
We begin by noting that ${\cal{S}}_m(\bX)$ is full if
${\cal{S}}_m(\bX_{i \cdot})$ is full for $i = 1, \ldots, n$.
Because the rows of $\bX$ are independent, it
therefore suffices to prove the result
in the case $n = 1$.  Thus we write $\bX = (X_0, \dots, X_{m - 1})$.  If
${\cal{S}}_m(\bX)$ is not full, then Corollary 1
implies that there exist integers $l, r \geq m / 3$ such that
$X_j = X_{r+j}$ for $j = 0,\ldots, l-1$.  An easy calculation using the Markov
property shows that, for fixed $r$, the $P_m$-probability of this event
is at most $\rho^{l-1}$, where
$\rho < 1$ is the maximum appearing in (\ref{conds}).  Thus the probability that
${\cal{S}}_m(\bX)$ is not full is at most $m \, \rho^{m / 3 - 1}$,
which tends to zero as $m$ tends to infinity.
\end{proof}

\vskip.1in

\noindent
{\bf Definition:}
A set $A \subset {\cal{A}}^{n \times m}$ is {\it invariant under constant shifts} if $\sigma_{\blds r}(A) = A$ whenever
$\br = (r,\ldots,r)$ is a constant index sequence.  Let $\mathbb{A}_m$ be the family of all sets 
$A \subset {\cal{A}}^{n \times m}$ that are invariant under constant shifts. 

\vskip.1in

\begin{theorem}
\label{thm2} 
Suppose that (\ref{conds}) holds and that the
stationary Markov chain
described by (\ref{MC}) is ergodic.
Then
\[
\max_{A \in \mathbb{A}_m} \vert P_{\blds X}^o( A ) - Q_{\blds X}^o( A ) \vert
\ \to \ 0
\]
\noindent
in probability as $m$ tends to infinity.
\end{theorem}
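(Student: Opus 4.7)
The plan is to first reduce the problem to a total-variation estimate on the marginal distribution of the relative shifts. Since each $A \in \mathbb{A}_m$ is invariant under the diagonal cyclic action $\sigma_{(s,\ldots,s)}$, the indicator $I(\sigma_{\blds r}(\bX) \in A)$ depends only on $\bd := (r_2 - r_1, \ldots, r_n - r_1) \pmod m$. Writing $w_i(r) := \phi_i(r)/S_i$ with $\phi_i(r) := p_1(X_{i,r}) p_1(X_{i,[r-1]})/p_2(X_{i,[r-1]}, X_{i,r})$ and $S_i := \sum_s \phi_i(s)$, the measure $P_{\blds X}^o$ pushes $\br$ forward by the product $w_1 \otimes \cdots \otimes w_n$, while $Q_{\blds X}^o$ uses the uniform distribution. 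Summing the shared base coordinate $r_1$ over each equivalence class,
\[
\max_{A \in \mathbb{A}_m} |P_{\blds X}^o(A) - Q_{\blds X}^o(A)| \leq \tfrac{1}{2} \sum_{\bd \in [m]^{n-1}} \Bigl| a_{\bd} - m^{-(n-1)} \Bigr|, \quad a_{\bd} := \sum_r w_1(r) \prod_{i \geq 2} w_i(r + d_i),
\]
so it suffices to show the right-hand sum tends to $0$ in $P_m$-probability.

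The principal difficulty is that the individual weights $w_i$ are \emph{not} themselves close to uniform: a direct calculation shows $\sum_s |w_i(s) - 1/m|$ tends in probability to $\E |\phi_i - 1|$, which is strictly positive in general. The required cancellations must come across rows, exploiting the mutual independence of $w_1, \ldots, w_n$ inherited from the independence of the rows of $\bX$.

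I would handle this via truncation combined with a Parseval-type $L^2$ estimate. Fix $M > 0$, set $\tilde\phi_i := \phi_i \wedge M$, and define $\tilde w_i, \tilde S_i, \tilde a_{\bd}$ analogously. A direct estimate gives $\|w_i - \tilde w_i\|_{TV} \leq (S_i - \tilde S_i)/S_i$, which by the ergodic theorem tends in probability to $\epsilon(M) := \E[(\phi_i - M)_+]$, and $\epsilon(M) \downarrow 0$ as $M \to \infty$ because $\phi_i$ is integrable (with mean $1$). Combining with TV-stability of product measures under the projection $\br \mapsto \bd$ yields $\|a - \tilde a\|_{TV} \leq n\epsilon(M) + o_P(1)$. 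For the truncated version, the DFT on $(\mathbb{Z}/m)^{n-1}$ gives $\hat{\tilde a}(\bk) = \hat{\tilde w}_1(-K) \prod_{i \geq 2} \hat{\tilde w}_i(k_i)$ with $K = \sum_{i \geq 2} k_i$, and the standard Parseval computation yields
\[
\sum_{\bd} (\tilde a_{\bd} - m^{-(n-1)})^2 = \sum_{l \in [m]} \prod_{i=1}^n R_i(l) - m^{-(n-1)},\qquad R_i(l) := \sum_s \tilde w_i(s) \tilde w_i(s + l).
\]
Truncation forces $R_i(0) \leq M/m$, so the $l = 0$ term contributes at most $(M/m)^n$. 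For $l \neq 0$, row-independence gives $\E[\prod_i R_i(l)] = \prod_i \E[R_i(l)]$, and exponential mixing of the ergodic Markov chain (ensured by the first condition in (\ref{conds})) yields $\E[R_i(l)] = m^{-1}(1 + O(\rho^{l-1}))$ for some $\rho < 1$; summing gives $\E\bigl[\sum_{l\neq 0}\prod_i R_i(l)\bigr] = m^{-(n-1)} + O(m^{-n})$. Thus $\E\bigl[\sum_{\bd}(\tilde a_{\bd} - m^{-(n-1)})^2\bigr] = O(M^n/m^n)$, and Cauchy--Schwarz converts this into $\sum_{\bd} |\tilde a_{\bd} - m^{-(n-1)}| = O_P(M^{n/2}/\sqrt m)$. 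A diagonal argument---first choose $M$ large so $n\epsilon(M)$ is small, then let $m \to \infty$---completes the proof.

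The hardest step will be the mixing estimate for $\E[R_i(l)]$ together with careful handling of the ``seam'' index $r = 0$, where $\phi_i(0)$ is only $O_P(1)$ by (\ref{OP1}) and need not be integrable in general. One must verify that the $O(1)$ seam contributions to sums of length $\sim m$ are negligible, and that the exponential decorrelation of the truncated functional $\tilde\phi(s) = g(X_{i,s-1}, X_{i,s})$ along the Markov chain is uniform in $l$ once $M$ is fixed; the first condition of (\ref{conds}), which prevents deterministic transitions, is precisely what guarantees the needed spectral gap.
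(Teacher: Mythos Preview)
Your reduction to the total-variation quantity $\sum_{\bd}|a_{\bd}-m^{-(n-1)}|$ is correct and is in fact equivalent to the paper's bound (\ref{prdiff}): once one observes that the paper's inner average over $k$ depends only on $\br$ modulo the diagonal, the sum over $\br\in[m]^n$ collapses to your sum over $\bd\in[m]^{n-1}$. The divergence is in how this quantity is controlled. The paper proves (Lemma~\ref{shift-lln}) that for independent stationary ergodic processes $\{U_i(k)\}$ with finite first moments,
\[
\max_{\br}\ \E\left|\,\frac{1}{m}\sum_{k=0}^{m-1}\Bigl(\prod_{i=1}^n U_i\bigl((r_i+k)\bmod m\bigr)-\mu\Bigr)\right|\ \to\ 0,
\]
via a block decomposition that isolates the at most $n$ indices where the modular indexing wraps around, together with the $L^1$ ergodic theorem applied to the product process. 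No quantitative mixing rate is used anywhere.

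Your route through truncation, Parseval, and the autocorrelation identity $\sum_{\bd}(\tilde a_{\bd}-m^{-(n-1)})^2=\sum_{l}\prod_i\tilde R_i(l)-m^{-(n-1)}$ has a genuine gap: the assertion that the first condition of (\ref{conds}), namely $\max_{u,v}p(u\,|\,v)<1$, ensures exponential mixing (a spectral gap) is false for countable state spaces. That condition forbids deterministic transitions but imposes no decorrelation rate; a positive-recurrent aperiodic birth--death chain on $\mathbb N$ with heavy-tailed stationary law satisfies (\ref{conds}) yet mixes only polynomially. Without a summable bound on $\E[\tilde R_i(l)]-m^{-1}$, the estimate $m^{n-1}\sum_{l\neq 0}\prod_i\E[\tilde R_i(l)]=1+O(m^{-1})$ does not go through, and the Cauchy--Schwarz step fails to close. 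A secondary issue is that $\tilde R_i(l)=\sum_s\tilde\phi_i(s)\tilde\phi_i(s+l)/\tilde S_i^{2}$ carries the random normalizer $\tilde S_i$, so passing from $\E\bigl[\prod_i\tilde R_i(l)\bigr]$ to a clean pointwise bound on each factor needs more justification than you give.

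Your approach would succeed under an added geometric-ergodicity hypothesis (and would then deliver quantitative rates the paper does not), but to match the theorem's stated generality---countable ${\cal A}$, mere ergodicity---the $L^1$ argument appears necessary. The cross-row cancellation you correctly identify as essential is captured in the paper by the block decomposition and the mean-zero structure of $V_m(k:\br)$, not by spectral decay.
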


\begin{proof}
Fix $m \geq 1$ and $A \in \mathbb{A}_m$.  For $k \in {\mathbb{Z}}$ let
${\bk}^{\ast} = (k, k, \dots, k) \in {\mathbb{Z}}^n$ be the constant
sequence each of whose coordinates is equal to $k$.  It follows from the
invariance of $A$ and the basic properties of cyclic shifts that for each $k \in {\mathbb{Z}}$,
\begin{eqnarray*}
P_{\blds X}^o (A)
& = &
\sum_{{\blds r} \in [m]^n} \eta(\sigma_{\blds r}(\bX)) \, I ( \sigma_{\blds r}(\bX) \in A ) \\
& = &
\sum_{{\blds r} \in [m]^n} \eta(\sigma_{\blds r + \blds k^*}(\bX)) \, I ( \sigma_{\blds r + \blds k^*}(\bX) \in A ) \\
& = &
\sum_{{\blds r} \in [m]^n} \eta(\sigma_{\blds r + \blds k^*}(\bX)) \, I ( \sigma_{\blds r}(\bX) \in A ) .
\end{eqnarray*}
Thus we may express $P_{\blds X}^o ( A )$ in the form of an average over $k$:
\[
P_{\blds X}^o ( A )
\ = \
\sum_{{\blds r} \in [m]^n}
\left[ \frac{1}{m} \sum_{k \in [m]} \eta(\sigma_{\blds r + \blds k^*}(\bX)) \right] I ( \sigma_{\blds r}(\bX) \in A ) .
\]
Combining this last expression with the definition of $Q_{\blds X}^o$ yields the bound
\begin{eqnarray}
\label{prdiff}
| P_{\blds X}^o ( A ) - Q_{\blds X}^o ( A ) |
& \leq &
\sum_{{\blds r} \in [m]^n}
\left| \frac{1}{m} \sum_{k \in [m]} \eta(\sigma_{\blds r + \blds k^*}(\bX)) -
\frac{1}{m^n} \, \right| I ( \sigma_{\blds r}(\bX) \in A )  \nonumber \\
& \leq &
\sum_{{\blds r} \in [m]^n}
\left| \frac{1}{m} \sum_{k \in [m]} \eta(\sigma_{\blds r + \blds k^*}(\bX)) - \frac{1}{m^n} \, \right| \nonumber \\
& = &
\frac{1}{m^n}  \sum_{{\blds r} \in [m]^n}
\left| \frac{1}{m} \sum_{k \in [m]} m^n \eta(\sigma_{\blds r + \blds k^*}(\bX)) - 1 \, \right| .
\end{eqnarray}

We now turn our attention to the quantity
$m^n \, \eta(\sigma_{\blds r + \blds k^*}(\bX))$
appearing in ({\ref{prdiff}).
Let $\bx = (x_0, \dots, x_{m - 1})$ be a fixed $m$-vector with
entries in ${\cal{A}}$, and let $t \in {\mathbb{Z}}$.  By expanding the
joint probability $p_m( \cdot )$ as a product of one-step conditional
probabilities and canceling common terms, a straightforward calculation
shows that for all integers $t$
\begin{equation}
\label{rhodelt}
m \cdot \frac{ p_m(\sigma_{t}(\bx)) }{ \sum_{s \in [m]} p_m(\sigma_s(\bx)) }
\ = \
\rho_{t}(\bx) \, \gamma_m^{-1}(\bx)
\end{equation}
where
\[
\rho_t(\bx) = \frac{ p_1(x_{[t]}) \, p_1(x_{[t-1]}) }{ p_2(x_{[t]},x_{[t-1]}) }
\ \mbox{ and } \
\gamma_m(\bx)
\, = \,
\frac{1}{m} \sum_{j=0}^{m-1} \rho_j(\bx) .
\]
(Recall that $[t] = t \ {\text{mod}}\ m$.)
It follows from the definition of $\eta(\bX)$ and equation (\ref{rhodelt}) that
\begin{eqnarray*}
m^n \, \eta(\sigma_{\blds r + \blds k^*}(\bX))
& = &
\prod_{i=1}^n
\left[
\frac{ m \, p_m(\sigma_{r_i+k}(\bX_{i \cdot})) }
{ \sum_{s \in [m]} p_m(\sigma_s(\bX_{i \cdot})) }
\right] \\
& = &
\prod_{i=1}^n \rho_{r_i + k}(\bX_{i \cdot}) \, \gamma_m^{-1}(\bX_{i \cdot})
\ = \
\Gamma_m^{-1}(\bX) \, \prod_{i=1}^n \rho_{r_i + k}(\bX_{i \cdot})
\end{eqnarray*}
where $\Gamma_m(\bX) = \prod_{i=1}^n \gamma_m(\bX_{i \cdot})$.

The assumptions of the theorem ensure that the random variables
$X_{i,0}$, \ldots, $X_{i,m-1}$ in the $i$th row of $\bX$ are the initial terms
of a stationary ergodic process, and therefore the same is true of the non-negative
random variables $\rho_1(\bX_{i \cdot})$,\ldots,$\rho_{m-1}(\bX_{i \cdot})$.
Note that the random variable $\rho_0(\bX_{i \cdot})$ cannot be included in this sequence
because it involves the non-adjacent variables $X_{i,0}$ and $X_{i,m-1}$.  
It is easy to see that
\[
\E \rho_1(\bX_{i \cdot})
\ = \
\E \left( \frac{ p_1(X_{i,1}) \, p_1(X_{i,0}) }{ p_2(X_{i,0}, X_{i,1} ) }  \right)
\ = \
\sum_{u,v \in {\cal{A}}} \frac{ p(u) \, p(v) }{ p(u,v) } \, p(u,v)
\ = \
1 .
\]
From the ergodic theorem and the fact that $\rho_0(\bX_{i \cdot})$ is stochastically bounded
(see (\ref{OP1})), it follows that
\begin{equation}
\label{gmop}
\gamma_m({\bf X}_{i \cdot}) \ = \  \E \rho_1(\bX_{i \cdot})  + o_P(1) \ = \ 1 + o_P(1) ,
\end{equation}
and therefore
$\Gamma_m(\bX)$ and $\Gamma_m^{-1}(\bX)$ are equal to $1 + o_P(1)$ as well.
(Here and in what follows the stochastic order symbols $o_P(1)$ and $O_P(1)$
refer to the underlying measure $P_m$ with $m$ tending to infinity).
For $\br \in [m]^n$ let 
$V_0(\br) = \{ k \in [m] : r_i + k \equiv 0 \mbox{ mod $m$ for some } 1 \leq i \leq n \}$
and let $V_1(\br) = [m] \setminus V_0(\br)$.  Note that $|V_0(\br)| \leq n$ for each 
$\br \in [m]^n$.
Combining the relation (\ref{gmop}) with inequality (\ref{prdiff}) and
equation (\ref{rhodelt}), we conclude that
\begin{eqnarray}
\lefteqn{| P_{X}^o ( A ) - Q_{X}^o ( A ) |} \nonumber \\[.1in]
& \leq &
\Gamma_m^{-1}(\bX) \cdot
\frac{1}{m^n}  \sum_{{\blds r} \in [m]^n}
\left| \frac{1}{m} \sum_{k \in [m]} \, \prod_{i=1}^n \rho_{r_i + k}(\bX_{i \cdot})  - 1 \, \right|
\ + \
| \Gamma_m^{-1}(\bX) - 1 | \nonumber \\[.1in]
& = &
O_P(1) \cdot
\frac{1}{m^n}  \sum_{{\blds r} \in [m]^n}
\left| \frac{1}{m} \sum_{k \in [m]} \, \prod_{i=1}^n \rho_{r_i + k}(\bX_{i \cdot})  - 1 \, \right|
\ + \
o_P(1) \nonumber \\[.1in]
\label{pdbnd}
& = &
O_P(1) \cdot
\frac{1}{m^n}  \sum_{{\blds r} \in [m]^n}
\left| \frac{1}{m} \sum_{k \in V_1({\blds r})} \, \prod_{i=1}^n \rho_{r_i + k}(\bX_{i \cdot})  - 1 \, \right|
\ + \
O_P(1) \, \Delta_m
\ + \ 
o_P(1) 
\end{eqnarray}
where in the last line
\[
\Delta_m
\ := \ 
\frac{1}{m^n}  \sum_{{\blds r} \in [m]^n}
\frac{1}{m} \sum_{k \in V_0({\blds r})} \, \prod_{i=1}^n \rho_{r_i + k}(\bX_{i \cdot}) .
\]
As the upper bound in (\ref{pdbnd}) is independent of our choice of $A \in \mathbb{A}_m$, it is enough to show 
that the first two terms in (\ref{pdbnd}) are $o_P(1)$.   Concerning the first term, 
by Markov's inequality it suffices to show that
\[
\max_{\blds r \in [m]^n}
\E \left| \frac{1}{m} \sum_{k \in V_1({\blds r})} \prod_{i=1}^n \rho_{r_i + k'}(\bX_{i \cdot})  - 1 \, \right|
\ \to \
0
\ \mbox{ as } \ m \to \infty .
\]
This follows from Corollary \ref{nzcor} below.  As for the second term, note that
\begin{eqnarray*}
\Delta_m
& \leq &
\prod_{i=1}^n ( \rho_{0}(\bX_{i \cdot}) \vee 1 ) \cdot
\left[ \frac{1}{m^n}  \sum_{{\blds r} \in [m]^n}
\frac{1}{m} \sum_{k \in V_0({\blds r})} \, \prod_{i: r_i + k \neq 0} 
\rho_{r_i + k}(\bX_{i \cdot}) \right] \\[.1in]
& = & 
O_P(1) \cdot
\left[ \frac{1}{m^n}  \sum_{{\blds r} \in [m]^n}
\frac{1}{m} \sum_{k \in V_0(\blds r)} \, \prod_{i: r_i + k \neq 0} 
\rho_{r_i + k}(\bX_{i \cdot}) \right] 
\end{eqnarray*}
The term in brackets is non-negative and has expectation at most $n / m$.  Thus
$\Delta_m = o_P(1)$ and the result follows.}
\end{proof}

Let $\{ U_{1}(k) : k \geq 0 \}$, \ldots, $\{ U_{n}(k) : k \geq 0 \}$ be independent,
real-valued
stationary ergodic processes defined on the same underlying probability
space.
Suppose that $\E |U_{i}(0)|$ is bounded for $i = 1,\ldots,n$, and define
$\mu = \Pi_{i=1}^n \E (U_{i}(0))$.
Let ${\bf r} = (r_1,\ldots,r_n)$ denote a vector with non-negative integer-valued
components.  For $k, m \geq 1$ define random variables
\[
V_{m} (k: {\bf r}) \ = \ \prod_{i=1}^n U_{i}((r_i + k) \, \mbox{mod} \, m) - \mu .
\]
The independence of the processes $\{U_i(\cdot)\}$ ensures that $\E( V_{m} (k : {\bf r}) ) = 0$.

\begin{lemma}
\label{shift-lln}
Under the assumptions above, $\max_{{\bf r} \in {\mathbb Z}^n} \,
\E \left| m^{-1} \sum_{k = 0}^{m-1} V_{m}(k : {\bf r}) \right|$
converges to zero as $m$ tends to infinity.
\end{lemma}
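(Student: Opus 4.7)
The plan is to reduce to a claim about independent centered processes via a product expansion, dispatch the single-factor case with the $L^{1}$ mean ergodic theorem, and attack the multi-factor case by truncation combined with an $L^{2}$ calculation that exploits independence across the coordinate processes.

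Since $(r_i + k) \bmod m$ depends only on $r_i \bmod m$, I may assume $r_i \in \{0,\ldots,m-1\}$. Writing $V_i = U_i - \mu_i$, expansion of the product gives
\[
\prod_{i=1}^n U_i - \mu \;=\; \sum_{\emptyset \neq S \subseteq \{1,\ldots,n\}} \Bigl(\prod_{i \notin S} \mu_i\Bigr) \prod_{i \in S} V_i,
\]
so by the triangle inequality it suffices to prove, for each non-empty $S$,
\[
\max_{\mathbf{r}} \E \Biggl| \frac{1}{m} \sum_{k=0}^{m-1} \prod_{i \in S} V_i\bigl((r_i+k) \bmod m\bigr) \Biggr| \to 0.
\]
When $|S| = 1$ with $S = \{j\}$, the cyclic reindexing $j' = (r_j+k) \bmod m$ collapses the inner sum to $m^{-1} \sum_{j'=0}^{m-1} V_j(j')$, which is independent of $\mathbf{r}$ and converges to $0$ in $L^{1}$ by the mean ergodic theorem applied to the centered stationary ergodic process $V_j$.

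For $|S| \geq 2$, I would first truncate: let $V_i^M$ be a bounded centered stationary ergodic approximation of $V_i$ with $\E|V_i - V_i^M| \to 0$ as $M \to \infty$. A telescoping expansion of the product bounds the truncation error by $C_{|S|} \sum_{i \in S} \E|V_i - V_i^M|$, uniform in $m$ and $\mathbf{r}$. For the bounded part, independence across $i$ together with stationarity of each $V_i^M$ give
\[
\E \Biggl( \frac{1}{m} \sum_{k=0}^{m-1} \prod_{i \in S} V_i^M\bigl((r_i+k) \bmod m\bigr) \Biggr)^{\!2} \;=\; \frac{1}{m^2} \sum_{k, k'} \prod_{i \in S} \gamma_i^M\bigl(d_i(k,k',r_i,m)\bigr),
\]
where $\gamma_i^M$ is the autocovariance of $V_i^M$ and $d_i$ is the cyclic $\mathbb{Z}/m$-distance between $(r_i+k)$ and $(r_i+k')$. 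Since $d_i \in \{|k-k'|,\, m-|k-k'|\}$, this double sum is controlled by $\|V_i^M\|_\infty^{2(|S|-1)}$ times $m^{-1} \sum_{d=0}^{m-1} |\gamma_i^M(d)|$ for any single coordinate $i \in S$.

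The hard part is showing that $m^{-1} \sum_d |\gamma_i^M(d)| \to 0$: bare ergodicity yields only Cesaro convergence of the signed $\gamma_i^M$ (equivalent to the $L^{2}$ mean ergodic theorem), not of its absolute value. In the Markov-chain setting of Theorem \ref{thm1} where this lemma is ultimately applied, however, the processes of interest (functions of a countable-state ergodic Markov chain, such as the $\rho$-functions appearing in Theorem \ref{thm2}) are weakly mixing, so $\gamma_i^M(d) \to 0$ as $d \to \infty$ and the Cesaro sum of the absolute values does tend to zero. Uniformity in $\mathbf{r}$ is then immediate because the upper bound depends on $\mathbf{r}$ only through cyclic distances, which range over $\{0,\ldots,m-1\}$ irrespective of $\mathbf{r}$. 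Combining the $L^{2}$ bound with the truncation error and sending first $m \to \infty$, then $M \to \infty$, yields the claim.
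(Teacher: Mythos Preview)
Your argument has a real gap in the $|S|\ge 2$ case, and you identify it yourself: the $L^2$ computation lands on $m^{-1}\sum_{d=0}^{m-1}|\gamma_i^M(d)|\to 0$, which is a weak-mixing statement and does \emph{not} follow from ergodicity alone (an irrational-rotation process already gives a counterexample). Retreating to ``the Markov-chain setting of Theorem~\ref{thm1}'' does not prove the lemma as stated---it proves a different lemma with an added hypothesis---and even as a patch for the application you would still owe a verification that the $\rho$-processes appearing in Theorem~\ref{thm2} are weakly mixing, which you have not supplied.

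The paper's proof avoids this difficulty by a different and much more direct mechanism. First, it applies the $L^1$ ergodic theorem \emph{once}, to the product process $\{\prod_{i=1}^n U_i(k):k\ge 0\}$ (the joint process $\{(U_1(k),\ldots,U_n(k))\}$ being stationary and ergodic), obtaining $\Delta(l):=\E\bigl|\,l^{-1}\sum_{k=0}^{l-1}(\prod_i U_i(k)-\mu)\bigr|\to 0$, together with the trivial bound $\Delta(l)\le\Delta_0:=2\prod_i\E|U_i(0)|$. Second---and this is the idea you are missing---it handles the modular indexing not via covariances but by a block decomposition: order the distinct values among $r_1,\ldots,r_n$ as $0=r(0)\le r(1)<\cdots<r(n')\le r(n'+1)=m$, set $m_j=r(j+1)-r(j)$, and write $\sum_{k=0}^{m-1}V_m(k:\mathbf r)=\sum_{j=0}^{n'}W_j$ with $W_j=\sum_{k=r(j)}^{r(j+1)-1}V_m(k:\mathbf r)$. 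On each such interval no coordinate wraps around modulo $m$, so by stationarity and independence of the $U_i$'s each $W_j$ has the same distribution as $\sum_{k=0}^{m_j-1}(\prod_i U_i(k)-\mu)$, whence $\E|m_j^{-1}W_j|=\Delta(m_j)$. This gives
\[
\E\Bigl|\,m^{-1}\sum_{k=0}^{m-1}V_m(k:\mathbf r)\Bigr|\ \le\ \sum_{j=0}^{n'}\frac{m_j}{m}\,\Delta(m_j)\ \le\ \frac{n\,l\,\Delta_0}{m}\ +\ \sup_{l'>l}\Delta(l'),
\]
the last step by splitting on $m_j\le l$ versus $m_j>l$. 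The bound is independent of $\mathbf r$ and tends to zero by letting $l=l(m)\to\infty$ with $l/m\to 0$. No product expansion, no truncation, no second moments, and no mixing assumption are needed.
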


\begin{proof}
Standard arguments show that the joint process
$\{ (U_1(k), \ldots, U_n(k)) : k \geq 0 \}$ is stationary and ergodic,
and therefore the same is true for the process $\{ \prod_{i=1}^n U_{i}(k) : k \geq 0 \}$
of products.
The $L_1$ ergodic theorem implies that
\begin{equation}
\label{deltinq1}
\Delta(l)
\ = \
\E \left| \, \frac{1}{l} \sum_{k = 0}^{l-1} \left( \prod_{i=1}^n U_{i}(k) \, - \, \mu \right) \right|
\ \to \ 0 \ \mbox{ as } \ l \to \infty.
\end{equation}
Note also that
\begin{equation}
\label{deltinq2}
\Delta(l)
\ \leq \
\E \left| \, \prod_{i=1}^n U_{i}(0) \, - \, \mu \, \right|
\ \leq \ 2 \prod_{i=1}^n \E |U_{i}(0)|
\ \deq \
\Delta_0
\end{equation}
which is bounded by assumption.

Fix $m \geq 1$ and ${\bf r} = (r_1, \ldots, r_n)$ with $r_i \geq 0$.
Because the indices of $U_i(\cdot)$ in $V_m(k: {\bf r})$ are
assessed modulo $m$,
we may assume without loss of generality that
$0 \leq r_1, \ldots, r_n \leq m-1$.
Let $0 \leq r(1) < r(2) < \cdots < r(n') \leq m-1$ be the distinct
order statistics of $r_1,\ldots,r_n$, and note that $n' \leq n$.
Define $r(0) = 0$, $r(n' + 1) = m$, and the differences
$m_j = r(j+1) - r(j)$ for $j = 0,\ldots, n'$.
Consider the decomposition
\begin{equation}
\label{decomp}
\sum_{k = 0}^{m-1} V_{m}(k: {\bf r})
\ = \
\sum_{j=0}^{n'} W_j
\ \mbox{ where } \
W_j
\ =
\sum_{k = r(j)}^{r(j+1)-1} V_{m}(k: {\bf r}) .
\end{equation}
The key feature of the sum $W_j$ is this: for $r(j) \leq k \leq r(j+1) - 1$
there are no ``breaks'' in
the indexing of the terms $U_{i}((r_i + k) \, \mbox{mod} \, m)$ in $V_{m}(k: {\bf r})$
arising from the modular sum.  In particular, there exist integers $\tilde{r}_1,\ldots, \tilde{r}_n$
such that $(r_i + k) \, \mbox{mod} \, m = \tilde{r}_i + k$ for each $i = 1,\ldots, n$,
and each $k$ in the sum defining $W_j$.
As a result, the stationarity and independence of the individual
processes $\{U_i(\cdot)\}$ ensures
that $W_j$ is equal in distribution to the random variable
\[
\tilde{W}_j
\ = \
\sum_{k = 0}^{m_j-1} \left( \prod_{i=1}^n U_{i}(k) \, - \, \mu \right) .
\]

We now turn our attention to the expectation in the statement of the
lemma.
It follows immediately from the decomposition (\ref{decomp}) that
\[
\frac{1}{m} \sum_{k = 0}^{m-1} V_{m}(k: {\bf r})
\ = \
\sum_{j=0}^{n'} \frac{m_j}{m} \frac{1}{m_j} W_j ,
\]
which yields the elementary bound
\[
\left| \frac{1}{m} \sum_{k = 0}^{m-1} V_{m}(k: {\bf r}) \right|
\ \leq \
\sum_{j=0}^{n'} \frac{m_j}{m} \left| \frac{1}{m_j} W_j \right| .
\]
Taking expectations of both sides in the last display yields the inequality
\[
\E \left| \frac{1}{m} \sum_{k = 0}^{m-1} V_{m}(k: {\bf r}) \right|
\ \leq \
\sum_{j=0}^{n'} \frac{m_j}{m} \, \E \left| \frac{1}{m_j} W_j \right|
\ = \
\sum_{j=0}^{n'} \frac{m_j}{m} \, \E \left| \frac{1}{m_j} \tilde{W}_j \right|
\ = \
\sum_{j=0}^{n'} \frac{m_j}{m} \, \Delta(m_j),
\]
where the first equality follows from the distributional equivalence of
$W_j$ and $\tilde{W}_j$.
In particular, for each integer $l \geq 1$ we have
\[
\E \left| \frac{1}{m} \sum_{k = 0}^{m-1} V_{m}(k: {\bf r}) \right|
\ \leq \
\sum_{j : m_j \leq l} \frac{m_j}{m} \, \Delta(m_j) \, + \,
\sum_{j : m_j > l} \frac{m_j}{m} \, \Delta(m_j)
\ \leq \
\frac{n l \Delta_0}{m} \, + \, \sup_{l' > l} \Delta(l') .
\]
It follows from (\ref{deltinq1}) and (\ref{deltinq2}) that the final term
in the last display tends to zero with $m$ if $l = l(m)$ is
any sequence such
that $l$ tends to infinity and $1 / m$ converges to 0.  Moreover, the final term does not depend on
the vector $r$.  This completes the proof of the lemma.
\end{proof}

\vskip.2in
An elementary argument using Lemma \ref{shift-lln} establishes the following corollary.

\begin{corollary}
\label{nzcor}
Under the assumptions of Lemma \ref{shift-lln}, 
$\max_{r \in {\mathbb N}^n} \E \left| m^{-1} \sum_{k'} V_{m}(k:r) \right|$
converges to zero as $m$ tends to infinity,
where for each $r$ the sum is restricted to those $k' \in [m]$ such
that $r_i + k' \not\equiv 0 \mbox{ mod } m$.
\end{corollary}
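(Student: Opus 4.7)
The plan is to derive the corollary directly from Lemma \ref{shift-lln} by writing the restricted sum as the full sum on $[m]$ minus a small boundary correction, and then observing that the correction involves only $O(n)$ summands and is therefore $O(n/m)$ in $L^1$ uniformly in $\mathbf{r}$. To set notation, let $V_0(\mathbf{r}) = \{k' \in [m] : r_i + k' \equiv 0 \bmod m \mbox{ for some } 1 \leq i \leq n\}$ and $V_1(\mathbf{r}) = [m] \setminus V_0(\mathbf{r})$. Since each coordinate $i$ contributes at most one value $k' \equiv -r_i \bmod m$, we have $|V_0(\mathbf{r})| \leq n$ uniformly in $\mathbf{r}$ and $m$.

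First I would write
\[
\frac{1}{m} \sum_{k' \in V_1(\mathbf{r})} V_m(k' : \mathbf{r})
\ = \
\frac{1}{m} \sum_{k \in [m]} V_m(k : \mathbf{r}) \ - \ \frac{1}{m} \sum_{k \in V_0(\mathbf{r})} V_m(k : \mathbf{r}),
\]
and apply the triangle inequality inside the expectation. By Lemma \ref{shift-lln}, the $L^1$ norm of the first term on the right converges to zero uniformly over $\mathbf{r} \in \mathbb{N}^n$, so the entire task reduces to controlling the corrective term.

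Next I would establish a uniform $L^1$ bound on each summand of the corrective term. Using stationarity together with the independence of the processes $\{U_i(\cdot)\}$,
\[
\E |V_m(k : \mathbf{r})|
\ \leq \
\E \left| \prod_{i=1}^n U_i((r_i + k) \bmod m) \right| + |\mu|
\ \leq \
\prod_{i=1}^n \E |U_i(0)| + \left| \prod_{i=1}^n \E U_i(0) \right|
\ \leq \
2 \prod_{i=1}^n \E |U_i(0)|,
\]
which is a finite constant independent of $k$, $\mathbf{r}$, and $m$ by the boundedness hypothesis. Combining this with $|V_0(\mathbf{r})| \leq n$ gives
\[
\E \left| \frac{1}{m} \sum_{k \in V_0(\mathbf{r})} V_m(k : \mathbf{r}) \right|
\ \leq \
\frac{2n}{m} \prod_{i=1}^n \E |U_i(0)|,
\]
which tends to zero uniformly in $\mathbf{r}$ as $m \to \infty$.

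There is no substantive obstacle in this argument beyond Lemma \ref{shift-lln} itself: the only facts being used are that the forbidden set has bounded cardinality $n$ and that $\E |V_m(k:\mathbf{r})|$ admits a uniform bound in terms of $\prod_i \E|U_i(0)|$, both of which are immediate from the definitions and the integrability hypothesis. The corollary is essentially a routine application of the lemma, inserted here only because the proof of Theorem \ref{thm2} invokes the restricted sum rather than the full sum.
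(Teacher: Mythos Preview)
Your proof is correct and is precisely the ``elementary argument using Lemma \ref{shift-lln}'' that the paper alludes to without spelling out: decompose into the full sum handled by the lemma and a boundary correction of at most $n$ terms, each with $L^1$ norm uniformly bounded by $2\prod_i \E|U_i(0)|$.
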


\noindent
{\bf Proof of Theorem \ref{thm1}:} Theorem \ref{thm1} follows from Theorem \ref{thm2} and the fact
that for each $B \subseteq \real$ the event $\{ \bY : T_m(\bY) \in B \} \in \mathbb{A}_m$
as $T_m$ is invariant under constant shifts.

\section{Discussion}

High resolution genomic data is routinely used by biomedical investigators to search for recurrent
genomic aberrations that are associated with disease.  
Cyclic shift testing provides a simple, permutation based approach to identify aberrant markers
in a variety of settings.
Here we establish finite sample, large marker asymptotics for the consistency of $p$-values produced 
by cyclic shift testing.  The results apply to a broad family of Markov based null distributions.
To our knowledge, this is the first theoretical justification of a testing 
procedure of this kind.  Although cyclic shift testing was developed for DNA copy number analysis, 
we demonstrate its utility for DNA methylation and meta-analysis of genome wide association studies.

\section{Acknowledgements}

This research was supported by the National Institutes of Health (T32 CA106209 for VW), the Environmental Protection Agency (RD835166 for FAW), the National Institutes of Health/National Institutes of Mental Health (1R01MH090936-01 for FAW), and the National Science Foundation (DMS-0907177 and DMS-1310002 for ABN).

\newpage
\section{Appendix}
\label{app}
\setcounter{figure}{0} \renewcommand{\thefigure}{A.\arabic{figure}} 
\begin{figure}[ht]
\begin{center}
\includegraphics[scale = .6]{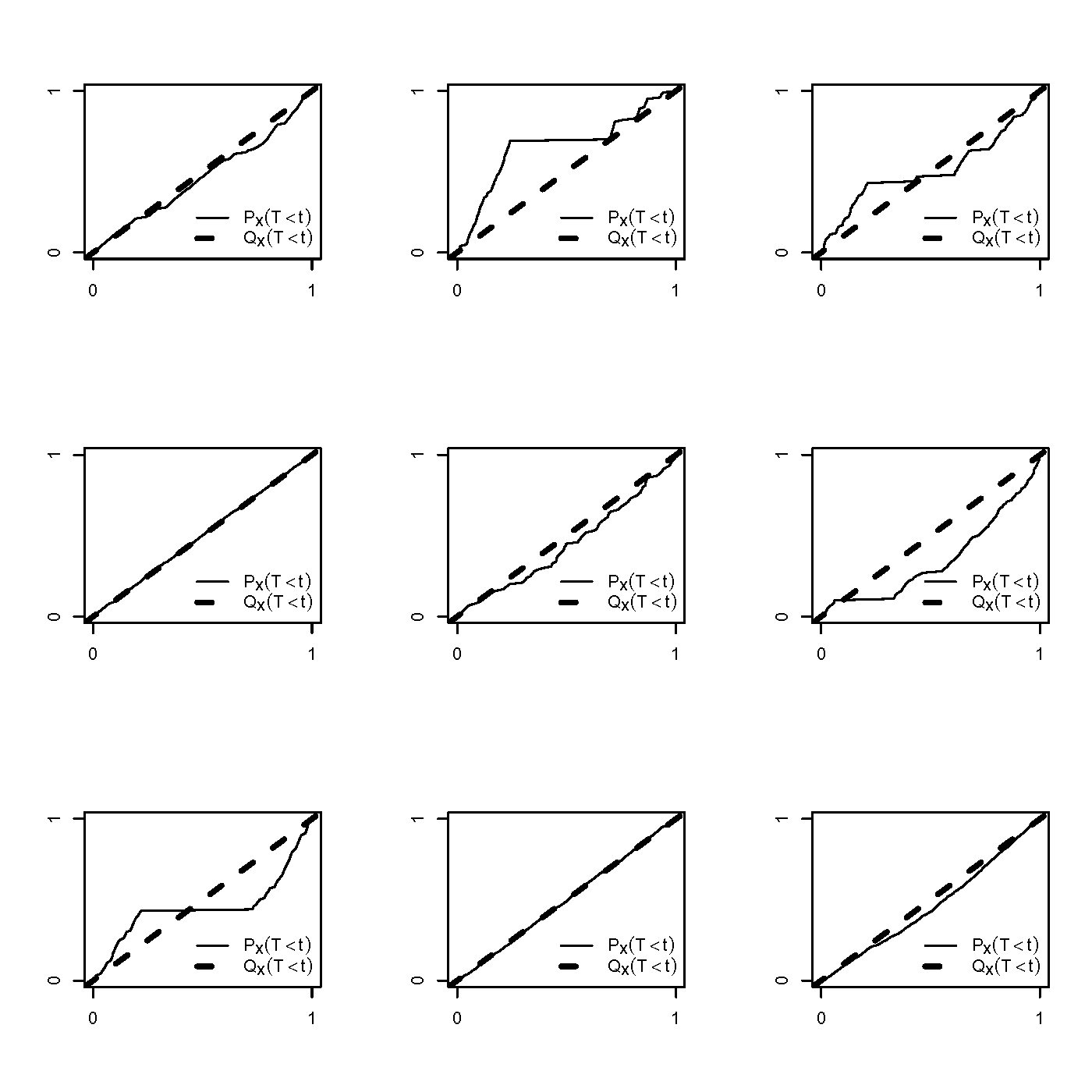}
\caption{Illustration of Resampling Distributions.  Empirical cumulative distribution functions for simulated matrices $\bX$ in which independent rows $\bX_{i \cdot}$ are generated by a Gaussian AR(1) process with mean 0, standard deviation 1, and correlation .9.  Each panel corresponds to a simulated $2 \times 100$ matrix $\bX$.}
\label{suppfig1}
\end{center}
\end{figure}

\setcounter{figure}{1} \renewcommand{\thefigure}{A.\arabic{figure}} 
\begin{figure}[ht]
\begin{center}
\includegraphics[scale = .6]{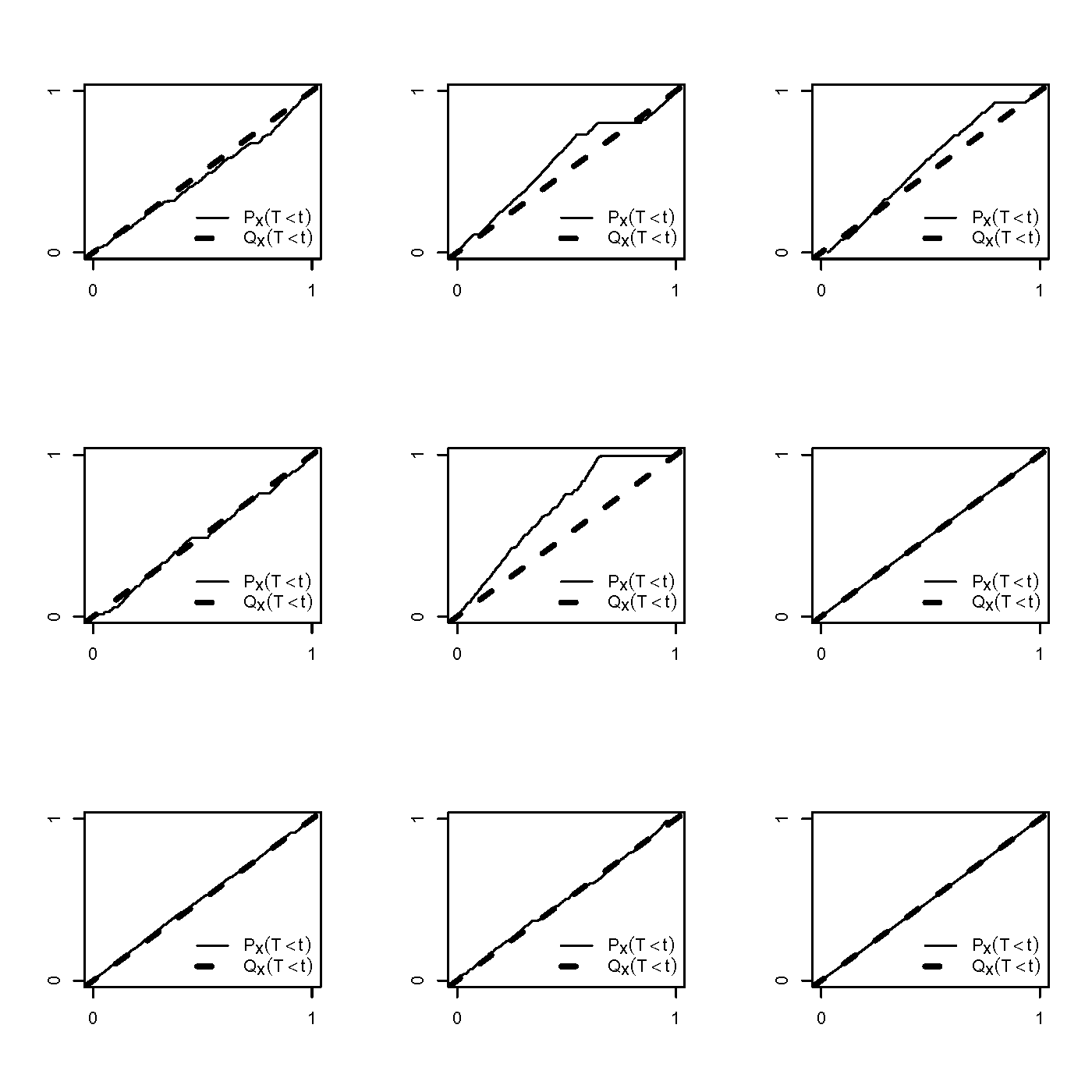}
\caption{Illustration of Resampling Distributions.  Empirical cumulative distribution functions for simulated matrices $\bX$ in which independent rows $\bX_{i \cdot}$ are generated by a Gaussian AR(1) process with mean 0, standard deviation 1, and correlation .9.  Each panel corresponds to a simulated $2 \times 1000$ matrix $\bX$.}
\label{suppfig2}
\end{center}
\end{figure}

\setcounter{figure}{2} \renewcommand{\thefigure}{A.\arabic{figure}} 
\begin{figure}[ht]
\begin{center}
\includegraphics[scale = .6]{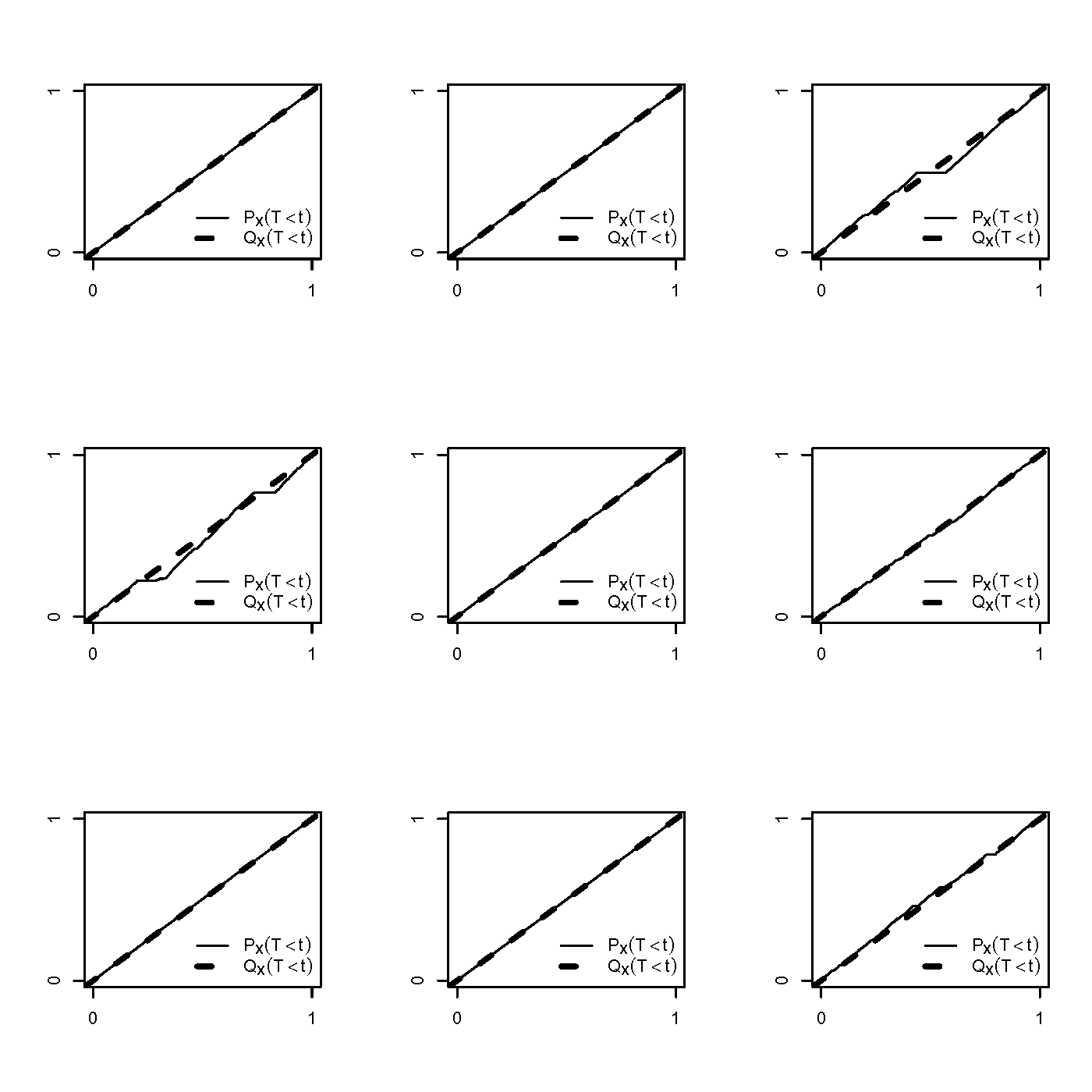}
\caption{Illustration of Resampling Distributions.  Empirical cumulative distribution functions for simulated matrices $\bX$ in which independent rows $\bX_{i \cdot}$ are generated by a Gaussian AR(1) process with mean 0, standard deviation 1, and correlation .9.  Each panel corresponds to a simulated $2 \times 10000$ matrix $\bX$.}
\label{suppfig3}
\end{center}
\end{figure}

\setcounter{figure}{3} \renewcommand{\thefigure}{A.\arabic{figure}} 
\begin{figure}[ht]
\begin{center}
\includegraphics[scale = .6]{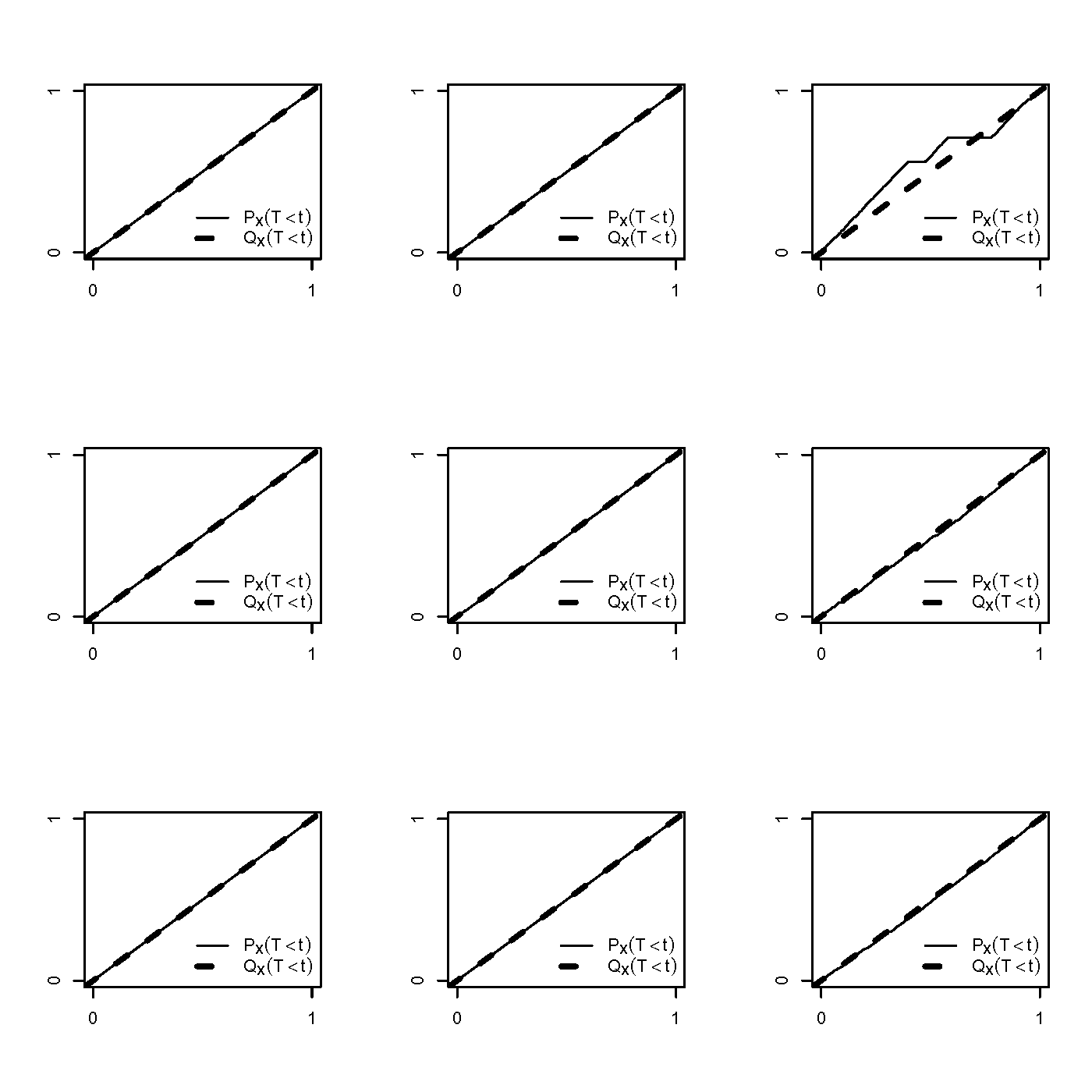}
\caption{Illustration of Resampling Distributions.  Empirical cumulative distribution functions for simulated matrices $\bX$ in which independent rows $\bX_{i \cdot}$ are generated by a Gaussian AR(1) process with mean 0, standard deviation 1, and correlation .9.  Each panel corresponds to a simulated $2 \times 100000$ matrix $\bX$.}
\label{suppfig4}
\end{center}
\end{figure}

\setcounter{figure}{4} \renewcommand{\thefigure}{A.\arabic{figure}} 
\begin{figure}[ht]
\begin{center}
\includegraphics[scale = .6]{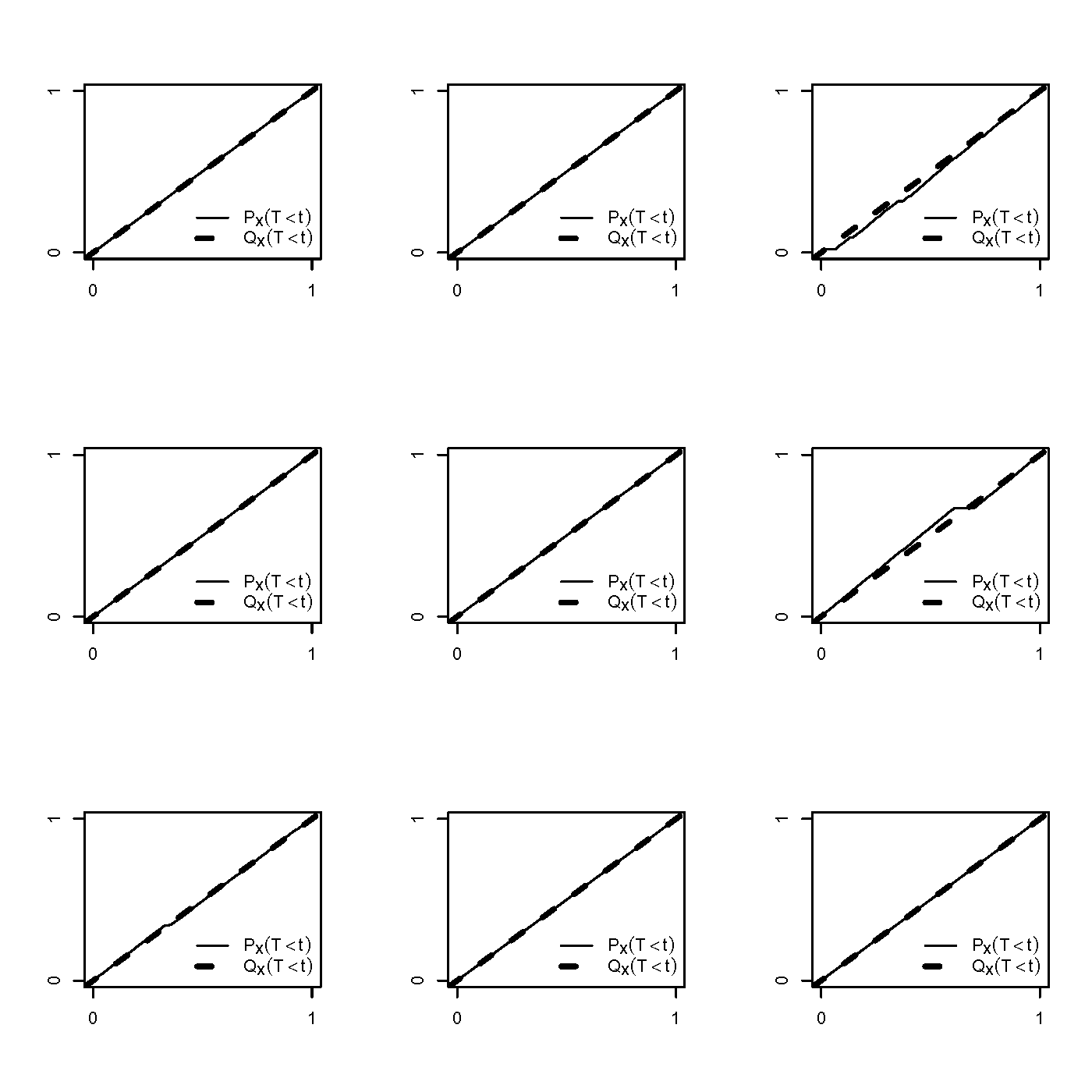}
\caption{Illustration of Resampling Distributions.  Empirical cumulative distribution functions for simulated matrices $\bX$ in which independent rows $\bX_{i \cdot}$ are generated by a Gaussian AR(1) process with mean 0, standard deviation 1, and correlation .9.  Each panel corresponds to a simulated $2 \times 500000$ matrix $\bX$.}
\label{suppfig5}
\end{center}
\end{figure}

\begin{thebibliography}{9}

\bibitem[Beroukhim et al.(2007)]{beroukhim2007}
Beroukhim, R., Getz, G., Nghlemphu, L., Barretina, J., Hsueh, T., Linhart, D., Vivanco, I., Lee, J.C., Huang, J.H., Alexander, S., et al.\ (2007).\ ``Assessing the significance of chromosomal aberrations in cancer: Methodology and application to glioma,'' {\textit{Proc. Nat. Acad. Sci.}} {\textbf{104}} 20007--20012.

\bibitem[Chitale et al.(2009)]{chitale2009}
Chitale, D., Gong, Y., Taylor, B.S., Broderick, S., Brennan, C., Somwar, R., Golas, B., Wang, L., Motoi, N., Szoke, J., et al. (2009).\ ``An integrated genomic analysis of lung cancer reveals loss of {\it{DUSP4}} in {\it{EGFR}}-mutant tumors,'' {\textit{Oncogene}} {\textbf{28}} 2773--2783.

\bibitem[Fackler et al.(2011)]{fackler2011}
Fackler, M.J., Umbricht, C.B., Williams, D., Argani, P., Cruz, L-A., Merino, V.F., Teo, W.W., Zhang, Z., Huang, P., Visananthan, K., et al.\ (2011).\ ``Genome-wide methylation analysis identifies genes specific to breast cancer hormone receptor status and risk of recurrence,'' {\textit{Cancer Res.}} {\textbf{71}} 6195--6207.

\bibitem[Fernando et al.(2008)]{fernando2008}
Fernando, M.M.A., Stevens, C.R., Walsh, E.C., De Jager, P.L., Goyette, P., Plenge, R.M., Vyse, T.J., Rioux, J.D. (2008).\ ``Defining the role of the MHC in autoimmunity: a review and pooled analysis.'' {\textit{PLoS Genet.}} {\textbf{4}}(4): e1000024.

\bibitem[Laird(2003)]{laird2003}
Laird, P.W., (2003).\ ``The power and the promise of DNA methylation markers,'' {\textit{Nature Rev.}} {\textbf{3}} 253--266.

\bibitem[Misawa et al.(2008)]{misawa2008}
Misawa, K., Ueda, Y., Kanazawa, T., Misawa, Y., Jang, I., Brenner, J.C., Ogawa, T., Takebayashi, S., Krenman, R.A. Herman, J.G., et al.\ (2008).\ ``Epigenetic inactivation of galanin receptor 1 in head and neck cancer,'' {\textit{Clin. Cancer Res.}} {\textbf{14}} 7604--7613.

\bibitem[Natrajan et al.(2006)]{natrajan2006}
Natrajan, R., Williams, R.D., Hing, S.N., Mackay, A., Reis-Filho, J.S., Fenwick, K., Iravani, M., Valgeirsson, H., Grigoriadis, A., Langford, C.F., et al.\ (2006).\ ``Array CGH profiling of favourable histology Wilms tumours reveals novel gains and losses associated with relapse," {\textit{J.\ Path.}} {\textbf{210}} 49 -- 58.

\bibitem[Pfeiffer et al.(2009)]{pfeiffer2009}
Pfeiffer, R.M., Gail, M.H., and Pee, D., (2009).\ ``On combining data from genome-wide association studies to discover disease-associated SNPs,'' {\textit{Stat. Sci.}} {\textbf{24}}(4) 547--560.

\bibitem[Pinkel and Albertson (2005)]{pinkel2005}
Pinkel, D. and Albertson, D.G., (2005).\ ``Array comparative genomic hybridization and its applications in cancer,'' {\textit{Nature Genet.}} {\textbf{37}} S11 -- S17.

\bibitem[Renard et al.(2009)]{renard2009}
Renard, I., Joniau, S., van Cleynenbreugel, B., Collette, C., Naome, C., Vlassenbroeck, I., Nicolas, H., de Leval, J., Straub, J., Van Criekinge, W., et al.\ (2010).\ ``Identification and validation of the methylated {\textit{TWIST1}} and {\textit{NID2}} genes through real-time methylation-specific polymerase chain reaction assays for the noninvasive detection of primary bladdar cancer in urine samples,'' {\textit{Eur. Urology}} {\textbf{58}} 96--104. 

\bibitem[Selamat et al.(2012)]{selamat2012}
Selamat, S.A., Chung, B.S., Girard, L., Zhang, W., Zhang, Y., Campan, M., Siegmund, K.D., Koss, M.N., Hagan, J.A., Lam, W.L., et al.\ (2012). ``Genome-scale analysis of DNA methylation in lung adenocarcinoma and integration with mRNA expression,'' {\textit{Genome Res.}} doi:10.1101/gr.132662.111.

\bibitem[Thompson et al.(2011)]{thompson2011}
Thompson, J.R., Attia, J., and Minelli, C., (2011).\ ``The meta-analysis of genome-wide association studies,'' {\textit{Briefings Bioinf.}} {\textbf{12}}(3) 259--269

\bibitem[Vire et al.(2006)]{vire2006}
Vire, E., Brenner, C., Deplus, R., Blanchon, L., Fraga, M., Didelot, C., Morey, L., Van Eynde, A., Bernard, D., Vanderwinder, J-M., et al.\ (2006).\ ``The polycomb group protein EZH2 directly controls DNA methylation,'' {\textit{Nature}} {\textbf{439}}(16) 871--874.

\bibitem[Walter et al.(2011)]{walter2011}
Walter, V., Nobel, A.B., and Wright, F.A.\ (2011).\ ``DiNAMIC:  a method to identify recurrent DNA copy number aberrations in tumors,'' {\textit{Bioinformatics}} {\textbf{27}}(5) 678--685.

\bibitem[Weir et al.(2007)]{weir2007}
Weir, B.A., Woo, M.S., Getz, G., Perner, S., Ding, L., Beroukhim, R., Lin, W.M., Province, M.A., Kraja, A., Johnson, L.A., et al.\ (2007).\ ``Characterizing the cancer genome in lung adenocarcinoma,'' {\textit{Nature}} {\textbf{450}} 893--901.

\bibitem[WTCCC (2007)]{wtccc2007}
The Welcome Trust Case Control Consortium, (2007).\ ``Genome-wide association study of 14,000 cases of seven common diseases and 3,000 shared controls,'' {\textit{Nature}} {\textbf{447}}(7) 661--678.

\bibitem[Wistuba et al.(1999)]{wistuba1999}
Wistuba, I.I., Behrens, C., Virmani, A.K., Milchgrub, S., Syed, S., Lam, S., Mackay, B., Minna, J.D., and Gazdar, A.F.\ (1999).\ ``Allelic losses at chromosome 8p21 - 23 are early and frequent events in the pathogenesis of lung cancer,'' {\textit{Cancer Research}} {\textbf{59}} 1973--1979.

\bibitem[Zeggini et al.(2008)]{zeggini2008}
Zeggini, E., Scott, L.J., Saxena, R., and Voight, B.F., for the Diabetes Genetics Replication and Meta-analysis (DIAGRAM) Consortium (2008).\ ``Meta-analysis of genome-wide association data and large-scale replication identifies additional susceptibility loci for type 2 diabetes,'' {\textit{Nature Genet.}} {\textbf{40}}(5) 638--645.
\end{thebibliography}
\end{document}